\newtheorem{theorem}{Theorem}[section]
\newtheorem{lemma}[theorem]{Lemma}
\newtheorem{proposition}[theorem]{Proposition}
\theoremstyle{definition}
\newtheorem{definition}[theorem]{Definition}
\theoremstyle{remark}
\newtheorem{remark}[theorem]{Remark}
\numberwithin{equation}{section}
\def\C{\mathbb C}
\def\R{\mathbb R}
\def\Z{\mathbb Z}
\def\N{\mathbb N}
\def\T{\mathbb T}
\def\Q{\mathbb Q}
\def\Lin{\mathcal L}
\def\span{\mathop{\text{span}}}
\newcommand\minus{%
  \setbox0=\hbox{-}%
  \vcenter{%
    \hrule width\wd0 height \the\fontdimen8\textfont3%
  }%
}
\title{Approximate Controllability of the Two Trapped Ions System}
\author{Esteban Paduro\thanks{
Departamento de Matem\'atica, Universidad Federico Santa Mar\'ia, Valpara\'iso, Chile,
esteban.paduro@gmail.com}
\thanks{The first author was supported in part by scholarship CONICYT Mag\'ister Nacional 2012, project FONDECYT  1120610 and project CONICYT ACT-1106.}, 
Mario Sigalotti
\thanks{
INRIA, Team GECO,
Centre de Recherche Saclay - \^Ile-de-France, France,
mario.sigalotti@inria.fr}
\thanks{The second author was partially supported by 
the European Research Council, ERC
StG 2009 GeCoMethods, contract number 239748
and  by the iCODE
institute, research project of the Idex Paris-Saclay.}
}
\begin{document}

\maketitle

\begin{abstract}
We prove the approximate controllability of a bilinear Schr\"odinger equation modelling a two trapped ions system. A new spectral decoupling technique is introduced,   which allows  to analyze 
the controllability of the infinite-dimensional system through 
 finite-dimensional considerations.
\end{abstract}

\section{Introduction}\label{ch:introduction}
In this paper we study the controllability of a system modelling a two trapped ions system driven by two laser beams. 

\subsection{The Model}
The two trapped ions system  models a pair of identical charged particles confined to a small region of the space.
Both ions are stabilized by the same spatial oscillation, given by a harmonic oscillator with of frequency $\omega$. 
The controls are monochromatic lasers of frequencies  $\Omega$ and  $\Omega\pm \omega$.

The state of the system is represented by a function $\phi=\left( \phi_{gg},\phi_{ge},\phi_{eg},\phi_{ee} \right)\in\left(L^2(\R)\right)^4$, which represents the  different possible configurations of electrons in the ions ($e$ for excited state, $g$ for ground state). In the Lamb--Dicke limit, the system can be written (see \cite{Rouchon2008}) as the following {\it two trapped ions system}  
\begin{equation}\label{S2IA_intro}
\begin{array}{rcl}
i\frac{d}{dt}\phi_{gg} & = &  (u_1 + u_{1r}a^{\dag}+ u_{1b}a) \phi_{eg} + (u_2 + u_{2r}a^{\dag}+ u_{2b}a) \phi_{ge},\\
i\frac{d}{dt}\phi_{eg} & = &  (u_1^{*} + u_{1r}^{*}a+ u_{1b}^{*}a^{\dag}) \phi_{gg} + (u_2 + u_{2r}a^\dag+ u_{2b}a) \phi_{ee},\\
i\frac{d}{dt}\phi_{ge} & = &  (u_1 + u_{1r}a^{\dag}+ u_{1b}a) \phi_{ee} + (u_2^{*} + u_{2r}^{*}a+ u_{2b}^{*}a^{\dag}) \phi_{gg},\\
i\frac{d}{dt}\phi_{ee} & = &  (u_1^{*} + u_{1r}^{*}a+ u_{1b}^{*}a^{\dag}) \phi_{ge} + (u_2^{*} + u_{2r}^{*}a+ u_{2b}^{*}a^{\dag}) \phi_{eg},\\
\end{array}
\end{equation}
where ${\protect u_1, u_{1r}, u_{1b}, u_2, u_{2r}, u_{2b} \in \C }$ are the controls of the system and $a=\frac{1}{\sqrt{2}}\left(x+\partial_x\right)$, $a^\dag=\frac{1}{\sqrt{2}}\left(x-\partial_x\right)$ are the creation and annihilation operators.

We are interested in the approximate controllability problem for system~\eqref{S2IA_intro} which can be read as follows:\\
\begin{quote}
Given $\varepsilon>0$, $\phi_0, \phi_T \in \left(L^2(\R)\right)^4$ with $\|\phi_0\|_{\left(L^2(\R)\right)^4}=\|\phi_T\|_{\left(L^2(\R)\right)^4}$,  find $T_0>0$ such that for any $T\geq T_0$ there exist bounded piecewise constant controls  $u_1(t)$, $u_{1r}(t)$, $u_{1b}(t)$, $u_2(t)$, $u_{2r}(t)$, $u_{2b}(t)$, 
with initial data $\phi(0)=\phi_0$ satisfying
    \begin{equation*}
        \left\|\phi(T) - \phi_T \right\|_{\left(L^2(\R)\right)^4} < \varepsilon.
    \end{equation*}
\end{quote}

\subsection{Existing Literature}

For a single trapped ion counterpart of \eqref{S2IA_intro}, exact controllability has been proved on some finite-dimensional subspaces of $\left(L^2(\R)\right)^2$ in \cite{Law1996} and \cite{Kneer1998}. 
In \cite{Yuan2007} an approximate controllability result on $\left(L^2(\R)\right)^2$ is obtained using spectral properties of coupling operators, allowing the decoupling of the internal dynamics. A similar approach is proposed in a more general framework in \cite{Burgarth}. 
A different approach, based on adiabatic evolution, is proposed in \cite{Adami2005}. 
The system modelling the single trapped ion before Lamb--Dicke limit has been studied in  \cite{Ervedoza2009b}, obtaining approximate controllability in $\left(L^2(\R)\right)^2$ and also in with respect to some stronger norms. Such results are based 
on 
a set of explicit estimates of the approximation error with respect to the Lamb--Dicke limit. The approximate controllability of a different model for the interaction of between a bosonic mode and a two-level system is proved in \cite{panoptikon}. 

For several trapped ions in the Lamb--Dicke limit, in \cite{Bloch2010} (see also \cite{Rangan2004a}) an approximate controllability result is obtained, 
based on  the analysis on a sequence of nested finite-dimensional system, which can be decoupled from the rest of system. In the recent paper \cite{Keyl2014} the (approximate) controllability of the system is established by considering  different families of controlled dynamics. 
Many other works studying the two trapped ions system deal with the construction of quantum gates \cite{Ions1995,Childs2000,Jonathan2000} but up to our knowledge, they do not present general controllability results for system~\eqref{S2IA_intro}.

\subsection{Main Results}

\newtheorem*{mainTh}{\bf Theorem \ref{mainTh}}

\begin{theorem}[Approximate Control of the two trapped ions system]\label{mainTh}
   Let $\varepsilon>0$, $M>0$ and $(\phi_0, \phi_T)\in \left(L^2(\R)\right)^4$, with $\|\phi_0\|=\|\phi_T\|$. Then, there exists $T_0>0$ such that for all $T>T_0$ we can find 
   $u_1$, $u_{1r}$, $u_{1b}$, $u_2$, $u_{2r}$, $u_{2b}$ $\in L^\infty((0,T),\C)$ 
   with $L^\infty$-norm smaller than $M$  such that the solution $\phi(t,x)$ of system~(\ref{S2IA_intro}) with initial data $\phi(0)=\phi_0$ satisfies
  $$\left\|\phi(T)-\phi_T\right\|_{\left(L^2(\R)\right)^4}<\varepsilon.$$
\end{theorem}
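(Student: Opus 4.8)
The plan is to apply the Galerkin-type approximate controllability machinery for bilinear Schrödinger equations (in the spirit of Chambrion--Mason--Sigalotti--Boscain and its refinements), but the difficulty here is that the drift-free structure of \eqref{S2IA_intro} — there is no fixed free Hamiltonian $H_0$, only controlled terms — forces us to build the reachable dynamics entirely out of Lie brackets and convexification of the available fields. So the first step is to recast \eqref{S2IA_intro} as $i\dot\phi = \sum_k u_k(t) B_k \phi$ on the Hilbert space $\mathcal H=(L^2(\R))^4$, where the $B_k$ are the six (complex-parametrized, hence really twelve real) self-adjoint operators built from $\mathrm{Id}$, $a$, $a^\dagger$ acting between the internal components $gg,ge,eg,ee$. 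I would fix the Hermite basis $\{h_n\}_{n\ge0}$ of $L^2(\R)$, on which $a,a^\dagger$ act tridiagonally, so that each $B_k$ is an explicit (unbounded but well-understood) operator with a common invariant core $\mathrm{span}\{h_n e_j\}$.

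The heart of the argument is the \emph{spectral decoupling technique} advertised in the abstract: I would identify a sequence of finite-dimensional subspaces $\mathcal H_N\subset\mathcal H$ — indexed say by a cutoff on the total excitation number $n$ plus internal state — that are left (approximately) invariant by a suitable choice of piecewise-constant controls, and reduce the controllability question on $\mathcal H_N$ to a finite-dimensional bilinear control problem. Concretely, one exploits the fact that the operators $a^\dagger$ and $a$ raise and lower the Hermite index by one, so by selecting control parameters one can, up to small error, freeze the dynamics outside a chosen block; within each block the problem becomes controllability of a finite quantum system, which one settles by checking that the relevant matrices generate $\mathfrak{su}(\mathcal H_N)$ (or act transitively on the sphere) via an explicit Lie-algebra-rank computation, using the non-degeneracy of the couplings between $gg\leftrightarrow eg$, $gg\leftrightarrow ge$, $ge\leftrightarrow ee$, $eg\leftrightarrow ee$ and the Hermite shifts. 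The genuinely new work is showing that these finite-dimensional pieces can be \emph{chained}: controllability on $\mathcal H_N$ together with a mechanism to transfer population between consecutive blocks yields, in the limit $N\to\infty$, a dense reachable set in $\mathcal H$ from any initial condition, with time $T_0$ controlled and $L^\infty$ control bounds respected.

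The remaining steps are then the standard but necessary closings of the argument. First, a well-posedness / finite-$L^1$-bound estimate: for piecewise-constant controls the solution of \eqref{S2IA_intro} exists, is unitary on $\mathcal H$, and the solution map depends continuously (in the strong topology, uniformly on norm-bounded sets) on the control — this lets us pass from the finite-dimensional exact statements to the infinite-dimensional approximate one. Second, a tail/truncation estimate: since unitarity preserves the $L^2$ norm and the target $\phi_T$ can be approximated to within $\varepsilon/3$ by a vector supported on $\mathcal H_N$ for $N$ large, and likewise $\phi_0$, it suffices to steer inside a large but finite block up to error $\varepsilon/3$, with another $\varepsilon/3$ absorbed by the decoupling error; choosing $N$ first, then the controls, then noting the time needed and the control amplitudes are finite, delivers $T_0$ and $M$-bounded controls after a time rescaling $u\mapsto \lambda u$, $t\mapsto t/\lambda$.

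I expect the main obstacle to be precisely the decoupling step: proving that one can approximately confine the infinite-dimensional flow to a finite block $\mathcal H_N$ using only the six available controls, while simultaneously retaining enough control authority inside $\mathcal H_N$ to act transitively — the two requirements pull in opposite directions, and the delicate point is to quantify the leakage out of $\mathcal H_N$ and show it can be made arbitrarily small (e.g.\ by using large control amplitudes and averaging/rotating-wave-type arguments, or by a careful choice of the control parameters that makes the off-block matrix elements vanish or be higher order). Establishing the finite-dimensional Lie-algebra-rank condition is in principle routine but, given the six complex controls and the block structure across the four internal components, will require a somewhat careful bookkeeping of brackets to confirm no accidental obstruction (no conserved quantity beyond the norm) survives.
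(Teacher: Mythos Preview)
Your overall architecture is right and matches the paper: Hermite basis, finite-dimensional modal approximations, Lie-algebra-rank verification on the blocks, unitarity plus a tail estimate to close. The time rescaling for the $M$-bound is also correct, thanks to the drift-free structure. Where you have a genuine gap is the decoupling step itself, and the mechanisms you propose for it would not work here.

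You suggest confining the flow to $\mathcal H_N$ by ``large control amplitudes and averaging/rotating-wave-type arguments'' or by ``a careful choice of the control parameters that makes the off-block matrix elements vanish or be higher order''. Neither is available: there is no drift Hamiltonian to average against, and the off-block couplings (the $\sqrt{n}$ matrix elements of $a,a^\dagger$) are fixed---no control parameter kills them. The paper's decoupling is of a completely different nature. One observes that each \emph{coupling operator} $Z_{\gamma\star}$ (e.g.\ $V_{1r}$) is itself skew-Hermitian with compact resolvent and eigenvalues $\{\pm i\sqrt{j}\}_{j\ge 0}$; the eigenspace for $|\lambda|<\sqrt{n}$ sits inside $Y_{4n}$ and that for $|\lambda|>\sqrt{n}$ sits in $Y_{4n}^\perp$. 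One then chooses the truncation level $p$ to be \emph{prime}, so that $\sqrt{h}/\sqrt{p}$ is irrational for every $1\le h<p$. This number-theoretic non-resonance lets one run an irrational-winding-of-the-torus argument: for any desired action $\exp(\hat t\,U_\ell)$ of a single spectral piece of $Z_{\gamma\star}$, there exists a time $\bar t$ such that the \emph{full} flow $\exp(\bar t\,Z_{\gamma\star})$ agrees with $\exp(\hat t\,U_\ell)$ up to a unitary $\Sigma$ that is $\varepsilon$-close to the identity on $Y_{4p}$. In other words, the decoupling is achieved not by tuning controls but by choosing the right \emph{time} for a single-control flow.

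This forces an intermediate object you do not mention: the paper introduces a \emph{decoupled modal approximation}, a finite-dimensional system on $Y_{4p}$ with \emph{more} controls than the original---one control per spectral piece $Z_{\gamma\star,j}^{(4p)}$. Controllability of this enlarged system follows trivially from that of the standard modal approximation (the latter's operators are sums of the former's). Each single-operator flow of the enlarged system is then realized approximately in the true infinite-dimensional system via the torus-winding step above, and a telescoping error lemma bounds the cumulative drift. There is no ``chaining between consecutive blocks''; a single sufficiently large prime block suffices.
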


\newtheorem*{mainCor}{\bf Corollary \ref{mainCor}}

For the proof we introduce a new finite-dimensional approximation of system~\eqref{S2IA_intro} with good control properties. In a second step we study spectral properties of the coupling operators of the original system; this allows us to construct 
the control laws of the original system based on those of the finite-dimensional approximation. Finally, using estimates for the approximation error, we obtain the approximate controllability of system~\eqref{S2IA_intro}.

The proof also shows that not all vector fields are necessary for the approximate controllability of the system. In particular, it is sufficient 
to exploit red shift only (respectively, blue shift only), that is, the system is still approximately controllable if one sets to zero the controls
$u_{1b},u_{2b}$ (respectively, the controls
$u_{1r},u_{2r}$). 
See the appendix for more details on this issue.

\subsection{Outline of the Work}

The paper is organized as follows. In Section~\ref{ch:decoplingOscillatory} we develop the technical tools required for the proof. Section~\ref{ch:TTI} is devoted to the proof of Theorem \ref{mainTh}. In Section~\ref{ch:galerkinTTI} we present a modal representation for the system. Section~\ref{ch:modalTTI} discusses the modal approximation of the problem and some auxiliary controllability properties. In Section~\ref{ch:decoupledTTI} we present the decoupled modal approximation and the proof of the main theorem.

\section{A Decoupling  Technique for Oscillatory Systems}\label{ch:decoplingOscillatory}

In this section we study a decoupling technique for oscillatory systems. This technique is based on the spectral decomposition of skew-hermitian operators and 
non-resonance conditions that make the decoupling possible. This is the main technical tool used to pass from the controllability result for the finite-dimensional approximation to the approximate controllability of the infinite-dimensional system.

\subsection{Preliminary Definitions}\label{definitions}

Let $X$ be an infinite-dimensional separable Hilbert space and $\{ \phi_j \}_{j\in \N}$ an orthonormal basis of $X$. Let $U: X\to X$ be a skew-hermitian operator with compact resolvent. 
Denote by $\Sigma(U)=\{\omega_j\}_{j=1}^\Gamma \subset [0,+\infty)$ the sequence of distinct moduli of the eigenvalues of $U$, with $\Gamma\in \N\cup\{\infty\}$. We say that the elements of $\Sigma(U)$ are the {\it frequencies associated with the operator $U$}. 

For each frequency $w\in \Sigma(U)$, we define the subspaces $A_{w}\subset X$ as the space generated by eigenfunctions  associated with the eigenvalues with modulus $w$, that is,
$$A_{w}=\span\{ x \mid U x = \lambda x , |\lambda|=w \},$$
and for sets of frequencies $B\subset \Sigma(U)$ we set 
$$A_B=\bigoplus\limits_{w\in B} A_w.$$

\begin{definition}[Resonant Class]
For $\nu\neq 0$ we define the set of $\Q$-resonant frequencies with $\nu$ as
$$R(\nu)=\left\{ w\in \Sigma(U) \mid  \frac{w}{\nu}\in\Q\setminus\{0\} \right\}.$$
 We find also useful to   set $R(0)=\{0\}$ and to introduce, for every $m\in \N$ and every $\nu\in \R$, 
$$R^m(\nu)=R(\nu)\cap \{\omega_j\}_{j<m}.$$

For every  $m\in\N$, we define an equivalence relation in $\{\omega_1, \dots,\omega_{m-1}\}$ by  $\omega_j \sim \omega_k$ if and only if $R^m(\omega_j)=R^m(\omega_k)$ and we denote by $N(m)$ 
 the number of 
 equivalence classes of such a relation. 
For each class $C_j\ne \{0\}$, $j=1,\dots,N(m)$, we choose $\nu_j>0$ such that $\frac{w}{\nu_j}\in\N$ for each $w\in C_j$ ($\nu_j$ exists because $C_j$ is finite). We also set $v_j=0$ if $C_j=\{0\}$. By definition of $\sim$, the elements of $\{ \nu_j \mid j=1,\dots,N(m),\ \nu_j\ne 0\}$ are $\Q$-linearly independent.
\end{definition}

\begin{definition}[Decoupled Decomposition]
For every $m\in \N$ with $m\leq \Gamma$ the {\bf decoupled decomposition of $U$ at order $m$} 
is the family of operators $U_1, \dots, U_{N(m)}$, $U_{\mathrm{dec}}, U_\rho : X \to X$ given by 
\begin{equation}\label{descomp}
\left\{\begin{array}{r@{\hspace{1mm}}l}	
    U_j &=U \,\Pi[A_{R^m(\nu_j)}], \quad j=1,\dots,N(m),\\
	U_{\mathrm{dec}} &= U \,\Pi[A_{\omega_m}], \\
	U_\rho &= U -\sum_{j=1}^{N(m)} U_j - U_{\mathrm{dec}} = U \left(I -\sum_{j=1}^{N(m)} \Pi[A_{R^m(\nu_j)}] - \Pi[A_{\omega_m}]\right),
\end{array}\right.
\end{equation}
where $\Pi[C]$ denotes the orthogonal projection over the space $C$. By definition
\begin{equation}\label{descomp4}
    U= U_1 +\dots+U_{N(m)}+ U_{\mathrm{dec}}+U_{\rho}.
\end{equation}

\end{definition}

Some basic properties of this decomposition are summarized in the following lemma.

\begin{lemma}\label{lemmadescomp}
    Let $U$ be a skew-hermitian operator with compact resolvent and let $U_1, \dots, U_{N(m)}, U_{\mathrm{dec}}, U_\rho$ be the  decoupled decomposition at order $m$ defined as above. Then 
    \begin{itemize}
      \item[(a)] 
	  \begin{itemize} 
		\item $\mathrm{im}\left(U_j\right)\subset A_{R^m(\nu_j)}, \quad j=1,\dots,N(m)$,
		\item $\mathrm{im}\left( U_{\mathrm{dec}} \right)\subset A_{\omega_m}$,
		\item $\mathrm{im}\left( U_\rho \right)\subset$ {\scriptsize $\left(\bigoplus\limits_{j=1}^{N(m)} A_{R^m(\nu_j)} \oplus A_{\omega_m}\right)^\perp$}.
	   \end{itemize}
      \item[(b)] Each pair of operators in the decomposition commute. Moreover, if $U, V\in \{U_1, \dots, U_{N(m)}, U_{\mathrm{dec}}, U_\rho\}$ and $U\neq V$ then $U\,V=V\,U=0$.
    \end{itemize}
\end{lemma}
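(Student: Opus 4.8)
The plan is to reduce the whole statement to the spectral structure of $U$. Since $U$ is skew-hermitian with compact resolvent, $iU$ is self-adjoint with compact resolvent, so $X$ admits an orthonormal basis of eigenvectors of $U$; the eigenvalues are purely imaginary, each has finite multiplicity, and the distinct moduli $\omega_j$ accumulate only at $+\infty$. Two remarks then do all the work. First, for every set of frequencies $B\subseteq\Sigma(U)$ the space $A_B$ is a \emph{reducing} subspace of $U$: both $A_B$ and $A_B^{\perp}=\overline{A_{\Sigma(U)\setminus B}}$ are spanned by eigenvectors of $U$, hence invariant, so $U$ commutes with $\Pi[A_B]$ and $\mathrm{im}\big(U\,\Pi[A_B]\big)=U(A_B)\subseteq A_B$. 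Second, since every eigenspace is finite-dimensional and $R^m(\nu_j)$ and $\{\omega_m\}$ involve only finitely many frequencies, the spaces $A_{R^m(\nu_j)}$ and $A_{\omega_m}$ are finite-dimensional; consequently $U_1,\dots,U_{N(m)},U_{\mathrm{dec}}$ are bounded finite-rank operators, and $U_\rho$ has domain $D(U)$.

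The only non-formal step is the combinatorics of the index sets. I would first note that $\Q$-resonance is an equivalence relation on $\R\setminus\{0\}$, so the choice $w/\nu_j\in\N$ for all $w\in C_j$ forces $R(\nu_j)=R(w)$ for each $w\in C_j$; intersecting with $\{\omega_1,\dots,\omega_{m-1}\}$ and recalling that $\sim$ was defined precisely through these intersections yields $R^m(\nu_j)=C_j$ (the case $C_j=\{0\}$ being trivial). Since $C_1,\dots,C_{N(m)}$ are \emph{all} the $\sim$-classes of $\{\omega_1,\dots,\omega_{m-1}\}$, the sets $R^m(\nu_1),\dots,R^m(\nu_{N(m)})$ partition $\{\omega_1,\dots,\omega_{m-1}\}$, so, writing $B_\rho:=\Sigma(U)\setminus\{\omega_1,\dots,\omega_m\}$, the family $\{R^m(\nu_1),\dots,R^m(\nu_{N(m)}),\{\omega_m\},B_\rho\}$ is a partition of $\Sigma(U)$. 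As the subspaces $A_w$, $w\in\Sigma(U)$, are pairwise orthogonal and span $X$, the corresponding orthogonal projections are pairwise orthogonal and sum to the identity, so $\sum_{j}\Pi[A_{R^m(\nu_j)}]+\Pi[A_{\omega_m}]+\Pi[A_{B_\rho}]=I$; in particular $I-\sum_{j}\Pi[A_{R^m(\nu_j)}]-\Pi[A_{\omega_m}]=\Pi[A_{B_\rho}]$, matching the formula for $U_\rho$ in \eqref{descomp}.

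Part (a) is then immediate: each operator of the decomposition has the form $U\,\Pi[A_B]$ with $B$ equal to $R^m(\nu_j)$, to $\{\omega_m\}$, or to $B_\rho$, and the reducing-subspace remark gives $\mathrm{im}\big(U\,\Pi[A_B]\big)\subseteq A_B$; for $U_\rho$ one uses in addition that $\overline{A_{B_\rho}}=\big(\bigoplus_{j=1}^{N(m)}A_{R^m(\nu_j)}\oplus A_{\omega_m}\big)^{\perp}$. For part (b), write two elements of the decomposition as $W=U\,\Pi[A_B]$ and $W'=U\,\Pi[A_{B'}]$. Since $U$ commutes with each spectral projection and $\Pi[A_B]\,\Pi[A_{B'}]=\Pi[A_{B\cap B'}]$, both products satisfy $WW'=W'W=U^2\,\Pi[A_{B\cap B'}]$, so $W$ and $W'$ always commute; and if $W\neq W'$ then $B\neq B'$, hence $B\cap B'=\emptyset$ by disjointness of the index family, whence $WW'=W'W=0$. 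No domain difficulty arises from the unbounded $U_\rho$: the range of any other factor already lies in the finite-dimensional space $A_B\subseteq D(U)$, and applying $\Pi[A_{B'}]$ to $U(A_B)\subseteq A_B$ returns $0$ when $B\cap B'=\emptyset$.

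The main obstacle, and essentially the only place where a genuine argument is needed, is the identification $R^m(\nu_j)=C_j$ together with the resulting disjointness and completeness of the family $\{R^m(\nu_1),\dots,R^m(\nu_{N(m)}),\{\omega_m\},B_\rho\}$; once this combinatorial fact is secured, parts (a) and (b) are routine manipulations with commuting orthogonal projections onto reducing subspaces of $U$, and the remainder of the write-up is bookkeeping — in particular, checking that the finitely many subtracted operators are bounded, so that $U_\rho$ is well defined on $D(U)$.
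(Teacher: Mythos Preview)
Your proposal is correct and follows the same underlying idea as the paper: the relevant subspaces are direct sums of eigenspaces of $U$, hence reducing, which immediately gives both the image containments and the vanishing of the cross products. The paper's proof is extremely terse (two sentences), while you supply the combinatorics showing $R^m(\nu_j)=C_j$ and that the associated index sets partition $\Sigma(U)$, together with the domain bookkeeping for the unbounded piece $U_\rho$; these are details the paper simply takes for granted, not a different route.
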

\begin{proof} $(a)$ It is enough to notice that the spaces $A_{R^m(\nu_i)}$ are direct sum of eigenspaces of $U$. The  space $A_{R^m(\nu_j)}$ is  then invariant under $U$. $(b)$ It follows from the fact that eigenspaces associated with different eigenvalues are orthogonal, and therefore the image of each operator is contained in the kernel of the other.
\end{proof}

\subsection{Approximate Decoupling of Skew-Hermitian Operators}
In this section we develop the main technical tool used in this paper, namely, an approximation method for the operators $\exp\left( U_j t \right)$ by means of $\exp(U \tau)$. This technique is based on the geometric fact classically known as the ``irrational windings of the torus'', which says that the integer multiples of an irrational number modulo  $1$ are dense in interval the $[0,1)$. In \cite{Childs2000,Gulde2003,Yuan2007} similar decoupling techniques are used for control purposes. Our formulation gives an abstract framework which could be applied to other problems.

For the proof we need a $n$-dimensional formulation of the mentioned result. The proof of this result is classical (see, e.g., \cite[Prop.~1.4.1]{MR1326374}).

\begin{lemma}[Irrational Winding of the Torus]\label{winding} 
Let $\T^n=\R^n/\Z^n$ be the $n$-dimensional torus with the usual distance. Let $\varphi$ the automorphism: $x\to x+\omega $ (mod 1), where $\omega \in \R^n$, $x\in \T^n$. Then the orbits of $\varphi$ are everywhere dense if and only if
$$k\cdot \omega\in\Z\text{ with }k\in \Z^n \Rightarrow k=0.$$
\end{lemma}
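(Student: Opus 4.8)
The plan is to prove this classical equidistribution statement by harmonic analysis on $\T^n$, reducing the density of orbits to Weyl-type equidistribution for the characters $x\mapsto e^{2\pi i k\cdot x}$.

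First, the necessity direction, by contraposition. Suppose there is $k\in\Z^n\setminus\{0\}$ with $k\cdot\omega=p\in\Z$. Then $f(x)=e^{2\pi i k\cdot x}$ is a non-constant continuous function on $\T^n$, and along any orbit $f(x+m\omega)=e^{2\pi i k\cdot x}e^{2\pi i mp}=e^{2\pi i k\cdot x}$ is constant in $m$. Hence the orbit of $x$ is contained in the proper closed level set $f^{-1}\!\left(e^{2\pi i k\cdot x}\right)$ and is not dense.

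For sufficiency, assume $k\cdot\omega\notin\Z$ for every $k\in\Z^n\setminus\{0\}$, fix $x_0\in\T^n$, and suppose for contradiction that $K:=\overline{\{x_0+m\omega:m\in\N\}}$ is a proper subset of $\T^n$. Since $\T^n\setminus K$ is open and nonempty, I can choose a nonempty closed ball $\overline B\subset\T^n\setminus K$ and a continuous $g:\T^n\to[0,1]$ with $\mathrm{supp}\,g\subset\overline B$ and $\int_{\T^n}g\,dx>0$. Then $g(x_0+m\omega)=0$ for all $m\in\N$, so its Cesàro averages along the orbit vanish identically, while $\int_{\T^n}g>0$. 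Thus it suffices to establish the equidistribution statement: for every $f\in C(\T^n)$,
\[
\frac1N\sum_{m=0}^{N-1}f(x_0+m\omega)\xrightarrow[N\to\infty]{}\int_{\T^n}f\,dx.
\]
By linearity and the Stone--Weierstrass theorem (trigonometric polynomials are uniformly dense in $C(\T^n)$), it is enough to check this for $f(x)=e^{2\pi i k\cdot x}$, $k\in\Z^n$. For $k=0$ both sides equal $1$; for $k\neq0$ the right-hand side vanishes, and since $e^{2\pi i k\cdot\omega}\neq1$ the geometric sum gives
\[
\left|\frac1N\sum_{m=0}^{N-1}e^{2\pi i k\cdot(x_0+m\omega)}\right|=\frac1N\left|\frac{e^{2\pi i Nk\cdot\omega}-1}{e^{2\pi i k\cdot\omega}-1}\right|\le\frac{2}{N\,\bigl|e^{2\pi i k\cdot\omega}-1\bigr|}\longrightarrow0.
\]
A routine $\varepsilon/3$ argument (approximate $f$ uniformly by a trigonometric polynomial $P$, then bound the average of $f-P$ and the integral of $f-P$ by $\|f-P\|_\infty$) transfers the limit to all continuous $f$, giving the desired contradiction.

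I expect the only delicate point to be the reduction from "the orbit is not dense" to "a nonnegative test function vanishes along the orbit yet has positive integral": one must choose $g$ so that its support is a genuine neighborhood-sized set disjoint from the orbit closure $K$, which is where compactness of $\T^n$ and closedness of $K$ are used. The Fourier computation itself is elementary. (An alternative route, slightly slicker but relying on the structure theory of closed subgroups of $\T^n$, is to note that $\overline{\{m\omega:m\in\N\}}$ is a closed subgroup $H$; if $H\neq\T^n$ then $\T^n/H$ admits a nontrivial character, which pulls back to a nontrivial $k\in\Z^n$ with $k\cdot\omega\in\Z$, contradicting the hypothesis.)
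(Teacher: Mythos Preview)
Your argument is correct: the necessity via a nontrivial invariant character and the sufficiency via Weyl's equidistribution criterion (geometric-sum estimate on characters, Stone--Weierstrass, then a bump-function contradiction) are all standard and correctly executed. The paper itself does not prove this lemma at all---it simply records it as classical and cites \cite[Prop.~1.4.1]{MR1326374}---so there is no ``paper's own proof'' to compare against; your write-up supplies exactly the kind of self-contained argument the paper omits.
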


The main result of the section is the following.

\begin{theorem}[Approximate Decoupling of Skew-Hermitian Operators]\label{contcontrol}
Let $X$ be a separable Hilbert space and $U: X \to X$ a skew-hermitian operator with compact resolvent. Let $\Sigma(U)$, $R(\cdot)$ and $N(\cdot)$ be  defined as in Section~\ref{definitions}. Suppose 
that
 there exists $m\in\N$ such that $\omega_m\ne 0$ and $\frac{\omega_j}{\omega_m}\in (\R\setminus\Q) \cup \{0\}$ for each $j<m$.
	
Let $Y$ be a subspace of $X$ such that 
\begin{equation}\label{hypY}
\bigoplus\limits_{j<m} A_{\omega_j} \subset Y \quad \text{and}\quad \bigoplus\limits_{j> m}A_{\omega_j} \subset Y^\perp.
\end{equation}

Then, given  $\hat{t}\in \R$, $\ell \in 1,\dots, N(m)$, and $\varepsilon >0$ there exist  $\bar{t}
\in \R$ and a unitary operator $\Sigma
:X\to X$ such that $\exp(\bar{t}\, U)$ admits the decomposition
\begin{equation}\label{decomposition}
\exp(\bar{t}\, U) =\exp (\hat{t} \,U_\ell)  \Sigma = \Sigma \exp (\hat{t} \,U_\ell),
\end{equation}
where $U_\ell$ is defined by \eqref{descomp} and the operator $\Sigma$ satisfies
$$
\left\| (\Sigma - I)|_{Y} \right\|_{\Lin(Y,X)} < \varepsilon  . $$

\end{theorem}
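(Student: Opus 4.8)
The plan is to diagonalize $U$ and reduce the statement to a simultaneous-approximation problem on a torus, then invoke Lemma~\ref{winding}. Write the spectral decomposition of $U$ with respect to the moduli of its eigenvalues: $X = \bigoplus_{j} A_{\omega_j}$, and on each $A_{\omega_j}$ the operator $U$ acts as multiplication by purely imaginary numbers of modulus $\omega_j$ (so $\exp(tU)$ acts on the corresponding eigenvectors by phases $e^{\pm i\omega_j t}$). Since $\omega_\ell$ is commensurable with all the (finitely many) frequencies in $R^m(\nu_\ell)$, there is $\nu=\nu_\ell>0$ with $w/\nu\in\N$ for every $w\in R^m(\nu_\ell)$; note $\exp(\hat t\,U_\ell)$ acts as the identity off $A_{R^m(\nu_\ell)}$ and as a phase $e^{\pm i w\hat t}$ on the $\pm iw$-eigenvectors with $w\in R^m(\nu_\ell)$. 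The goal is thus to choose $\bar t$ so that $e^{\pm i w\bar t}\approx e^{\pm i w\hat t}$ for $w\in R^m(\nu_\ell)$, while $e^{\pm i\omega_j\bar t}\approx 1$ for all other $\omega_j$ with $A_{\omega_j}\subset Y$; the operator $\Sigma$ is then defined as $\exp(\bar t\,U)\exp(-\hat t\,U_\ell)$, which is unitary, commutes with $\exp(\hat t\,U_\ell)$ (both are functions of $U$), and whose restriction to $Y$ is close to $I$ in operator norm once these phase approximations hold uniformly.

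Here is where the hypothesis on $m$ and the structure of $Y$ enter. The frequencies that matter for $\Sigma|_Y$ are those $\omega_j$ with $A_{\omega_j}\subset Y$; by \eqref{hypY} these are contained in $\{\omega_j : j\le m\}$ together with possibly some frequencies among $\{\omega_j\}_{j>m}$ that happen to lie in $Y$ — but actually the hypothesis forces $A_{\omega_j}\subset Y^\perp$ for $j>m$, so the only frequencies acting nontrivially on $Y$ are $\omega_1,\dots,\omega_{m-1}$ and $\omega_m$. Partition $\{\omega_1,\dots,\omega_{m-1}\}$ into the commensurability classes $C_1,\dots,C_{N(m)}$ with representatives $\nu_1,\dots,\nu_{N(m)}$, and recall these $\nu_j$ are $\Q$-linearly independent. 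By the hypothesis $\omega_j/\omega_m\in(\R\setminus\Q)\cup\{0\}$ for $j<m$, the frequency $\omega_m$ is incommensurable with every $\nu_j$, so $\{\nu_1,\dots,\nu_{N(m)},\omega_m\}$ is a $\Q$-linearly independent set of reals, hence $(\nu_1,\dots,\nu_{N(m)},\omega_m)/(2\pi)$ generates a dense subgroup-orbit on $\T^{N(m)+1}$ by Lemma~\ref{winding}.

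Now the construction of $\bar t$. On the class $C_\ell$ we want $e^{i w\bar t}=e^{i w\hat t}$ up to $\varepsilon$ for all $w\in C_\ell$: since $w=q_w\nu_\ell$ with $q_w\in\N$, it suffices that $\nu_\ell\bar t \equiv \nu_\ell\hat t \pmod{2\pi}$ approximately, with the error controlled by $\max_w q_w$. On every other class $C_k$, $k\ne\ell$, and on $\omega_m$, we want the phases close to $1$, i.e. $\nu_k\bar t\equiv 0$ and $\omega_m\bar t\equiv 0\pmod{2\pi}$ approximately. Because the whole tuple of frequencies $(\nu_1,\dots,\nu_{N(m)},\omega_m)$ is $\Q$-linearly independent, Lemma~\ref{winding} applied to the flow $t\mapsto t(\nu_1,\dots,\nu_{N(m)},\omega_m)/(2\pi) \bmod \Z^{N(m)+1}$ gives density of the orbit, so we can pick $\bar t$ with $(\nu_\ell\bar t)/(2\pi)$ arbitrarily close mod $1$ to $(\nu_\ell\hat t)/(2\pi)$ and all other coordinates $(\nu_k\bar t)/(2\pi)$, $(\omega_m\bar t)/(2\pi)$ arbitrarily close mod $1$ to $0$. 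Converting the closeness on the torus into closeness of the phases $e^{i w t}$ for the finitely many $w\le\omega_m$ involved — and then into the operator-norm bound $\|(\Sigma-I)|_Y\|<\varepsilon$ — is routine, using that there are only finitely many relevant eigenspaces inside $Y$ and that on each the discrepancy of $\Sigma$ from $I$ is exactly $|e^{i w\bar t}-e^{i w\hat t}|$ (for $w\in C_\ell$) or $|e^{i w\bar t}-1|$ (otherwise). The decomposition \eqref{decomposition} and unitarity of $\Sigma$ follow immediately from the functional calculus for $U$.

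The main obstacle is bookkeeping rather than depth: one must be careful that the density statement is applied to the correct $\Q$-independent tuple (this is exactly what the incommensurability hypothesis on $\omega_m$ and the $\Q$-independence of the $\nu_j$ buy us), and that ``$\exp(\hat t\,U_\ell)$ acts trivially off $A_{R^m(\nu_\ell)}$'' is used so that we do not need to control phases for frequencies outside the commensurability class of $\nu_\ell$ beyond bringing them near $1$. A secondary point to handle cleanly is that $Y$ may be strictly larger than $\bigoplus_{j\le m}A_{\omega_j}$ but, by \eqref{hypY}, only contributes eigenspaces with frequency in $\{\omega_j:j\le m\}$ to the part of $Y$ on which $\exp(\bar t U)$ and $\exp(\hat t U_\ell)$ act nontrivially, so the uniform estimate over finitely many frequencies still suffices.
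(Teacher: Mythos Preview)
Your approach is essentially the same as the paper's: both diagonalize $U$, use the decoupled decomposition, and reduce to a density statement on a torus via Lemma~\ref{winding}. The one technical difference is that the paper restricts $\bar t$ to the arithmetic progression $\bar t=\hat t+\tfrac{2\pi}{\hat\nu_\ell}s$, $s\in\N$, so that $\exp(\bar t\,U_\ell)=\exp(\hat t\,U_\ell)$ holds \emph{exactly}, and then applies the discrete form of Lemma~\ref{winding} to the remaining frequencies $\{\nu_j:j\ne\ell,\ \nu_j\ne 0\}\cup\{\omega_m\}$; you instead approximate all phases, including the $\ell$-th one, simultaneously via the continuous linear flow on $\T^{N(m)+1}$. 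Two small remarks: Lemma~\ref{winding} as stated is for the discrete automorphism, not the continuous flow you invoke---the continuous Kronecker--Weyl statement is of course equally standard, but strictly speaking you should either cite it separately or reduce to the discrete version as the paper does; and by~\eqref{hypY} one actually has $Y\subset\bigoplus_{j\le m}A_{\omega_j}$, so your final worry about $Y$ being strictly larger than this sum is unfounded.
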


\begin{proof}
Let $\ell \in \{1,\dots, N(m)\}$, $\hat{t}\in \R$, and $\varepsilon >0$ be fixed. We  split the proof in 4 steps.
\begin{description}
\item[Step 1 ] Consider the decoupled decomposition \eqref{descomp} of the operator $U$, with $m$ satisfying the hypotheses of the theorem. By Lemma~\ref{lemmadescomp} the terms in the decomposition commute and then we have
\begin{equation}\label{expdescomp}
    \exp(t U)= \exp(t U_1)  \cdots  \exp(t U_{N(m)}) \exp(t U_{\mathrm{dec}}) \exp(t U_\rho).
\end{equation}

\noindent
By  definition of $\nu_\ell$, either $\nu_\ell=0$ or $\frac{w}{\nu_\ell}\in\N$ with $w\in R^m(\nu_\ell)$. 
Let $\hat\nu_\ell=\nu_\ell$ if $\nu_\ell\ne 0$ while, if $\nu_\ell=0$, choose $\hat \nu_\ell=1$.
Define $\bar{t}$ by
\begin{equation}\label{bart}
\bar{t} = \hat{t} + \frac{2 \pi}{\hat \nu_\ell} s,\quad s\in \N,
\end{equation}
where $s$ will be chosen later. Then we have
\begin{equation*}
 w \bar{t} = w \hat{t} + 2\pi \frac{w}{\hat \nu_\ell} s \equiv w \hat{t}\quad (\mbox{mod}~2\pi),\qquad\forall w \in R^m( \nu_\ell).
\end{equation*}
(Notice that $w=0$ if $\nu_\ell=0$.)
By definition of $U_\ell$, it follows 
that
\begin{equation}\label{expbart}
\exp(\bar{t} U_\ell)=\exp(\hat{t} U_\ell),
\end{equation}
independently of the choice of $s\in \N$.

\item[Step 2 ] By definition of the $\Q$-resonant frequency classes, 
the elements of $\left\{\nu_j \right\}_{j\in \{1,\dots,\ell-1,\ell+1,\dots,N(m)\}, \nu_j\ne 0}\cup\{\omega_m\}$ are $\Q$-linearly independent.
Let $\hat N(m)$ be equal to $N(m)$ if $\nu_j\ne 0$ for every $j\in\{1,\dots,\ell-1,\ell+1,\dots,N(m)\}$ and to $N(m)-1$ otherwise. Define $\hat \nu_1,\dots,\nu_{\hat N(m)}$ in such a way that  $\left\{\nu_j \right\}_{j \in \{1,\dots,\ell-1,\ell+1,\dots,N(m)\}, \nu_j\ne 0}\cup\{\omega_m\}=\left\{\hat \nu_j \right\}_{j=1}^{\hat N(m)}$. 

Consider the application
$$\begin{array}{cccl}
	F: & \R & \to & \T^{\hat N(m)}
	, \\
	   & s & \mapsto & s \frac{2\pi}{\hat\nu_\ell}\left( \hat \nu_1,\dots,\hat \nu_{\hat N(m)}\right).
\end{array}$$
 By Lemma \ref{winding} we obtain that $F(\N)$ is dense in the torus $\T^{\hat N(m)}$. 
 Since $F(s)$ can be taken as close to $-\hat{t}\left( \hat \nu_1,\dots,\hat\nu_{\hat N(m)}\right)$ as desired, 
it is possible, for every $c>0$ (to be chosen later suitably small) to select $s\in\N$ in such a way that
\begin{equation*}
d_{\T^{1}}\left( s \frac{2\pi}{\hat\nu_\ell} \hat \nu_j,-\hat{t}\hat \nu_j  \right) < c \varepsilon,\qquad \mbox{for $j=1,\dots,\hat N(m)$}.
\end{equation*}
By definition of $\bar t$, the latter system of inequalities can be rewritten as 
\begin{equation*}
d_{\T^{1}}\left( \bar{t}\,\hat \nu_j,0  \right) < c \varepsilon,\qquad \mbox{for $j=1,\dots,\hat N(m)$}.
\end{equation*}
Equivalently, there exist $\delta_1,\dots,\delta_{\hat N(m)}$ such that  
\begin{equation}\label{toruswind}
|\delta_j|<c\varepsilon,\quad \delta_j= \bar{t}\,\hat \nu_j  \quad (\mbox{mod }2\pi),\qquad \mbox{for $j=1,\dots,\hat N(m)$}.
\end{equation}

\item[Step 3 ] Let $h\in \{1,\dots,\ell-1,\ell+1,\dots,N(m)\}$. 
By construction, $U_h$ only has finitely many eigenvalues, so we can write its spectral decomposition as
\begin{equation}\label{specdecomp1}
\bar{t} U_h = i \theta_1 \bar{t}\, \Pi_{\theta_1}+\dots+i \theta_n \bar{t} \,\Pi_{\theta_n},
\end{equation}
where $i\theta_1,\dots, i\theta_n\in i\R$ are the  
eigenvalues of $U_h|_{A_{R^m(\nu_h)}}$ and $\Pi_{\theta_k}$ the projection operator onto the eigenspace associated with $i\theta_k$. Using the representation (\ref{specdecomp1}) we can compute the exponential of the operator $U_h$ as follows
$$\exp(\bar{t} U_h)= e^{i \theta_1\bar{t}}\,\Pi_{\theta_1}+\dots+e^{i \theta_n\bar{t}}\,\Pi_{\theta_n} +\Pi[A_{R^m(\nu_h)}^\perp].$$

Fix $k=1,\dots,n$. 
Notice that either $\nu_h=0=\theta_k$ or there exists $j\in \{1,\dots,\hat N(m)\}$ such that $\nu_h=\hat \nu_j$ and $\frac{|\theta_k|}{\hat \nu_j}$ is in $\N$. 
By equation (\ref{toruswind}) it then follows that
$$d_{\T_1}\left(\theta_k \bar{t},0\right)=d_{\T_1}\left( \frac{\theta_k}{\hat \nu_j} \hat \nu_j \bar{t},0\right)\leq  \frac{|\theta_k|}{\hat \nu_j} c\varepsilon.$$

Hence, 
$$
  \left\|e^{\bar{t}U_h}-I\right\|_{\Lin(X)}  \leq \sum\limits_{k=1}^n\left|e^{i\bar{t}\,\theta_k}-1\right| \leq \sum\limits_{k=1}^n d_{\T^1}(\theta_k \bar{t},0) \leq c\varepsilon\sum\limits_{k=1}^n \frac{|\theta_k|}{\hat\nu_j},
$$
with $e^{\bar{t}U_h}=I$ if $\nu_h=0$. 
Moreover, reasoning similarly for 
$U_{\mathrm{dec}}$, whose only possible nonzero eigenvalues are $\pm i \omega_m$, we get
$$\left\|e^{\bar{t}U_{\mathrm{dec}}}-I\right\|_{\Lin(X)} <  2c\varepsilon.$$
Finally, 
for every $c'>0$ there exists a choice of  $\bar t$ as in \eqref{bart} such that 
\begin{equation}
    \label{estimateApprox}
  \left\|e^{\bar{t}U_h}-I\right\|_{\Lin(X)}  < c' \varepsilon,\qquad \mbox{for $h\in \{1,\dots,\ell-1,\ell+1,\dots,N(m),\mathrm{dec}\}$}. 
\end{equation}

\item[Step 4 ] 
Define
$$\Sigma=\left( \prod\limits_{j=1, j\neq\ell}^{N(m)}\exp(\bar{t}\, U_j) \right)\exp(\bar{t}\, U_{\mathrm{dec}}) \exp(\bar{t} \,U_{\rho}).$$
Then \eqref{decomposition} follows from the commutativity of the terms in the decomposition (\ref{expdescomp}) and from the identity (\ref{expbart}).

By hypothesis, $Y\subset \mathrm{Ker} \left(U_\rho\right)$ and therefore $\exp(\tau U_\rho) \phi =\phi$ for $\phi \in Y$. Then estimate \eqref{estimateApprox} with $c'=1/N(m)$ implies that $\Sigma$ satisfies
$$\left\|  (\Sigma - I)|_Y \right\|_{\Lin(Y,X)} \leq \sum_{j\neq \ell}\left\| e^{\bar{t}U_j}-I \right\|_{\Lin(Y)} + \left\| e^{\bar{t}U_{\mathrm{dec}}}-I \right\|_{\Lin(X)} < N(m) \frac{\varepsilon}{N(m)}=\varepsilon.$$
\end{description}
\end{proof}

Based on the unitarity of the evolution, we next provide an estimate for the tracking error made by iterated 
approximations. 

\begin{lemma}[Iterated Approximations]\label{errorEstimation}
  Let $X$ be a Hilbert space and $Y $ a subspace of $X$. Take $x_0\in Y$, $\|x_0\|=1$ and let $\Upsilon_1,\dots, \Upsilon_N$ be a sequence of unitary operators such that $x_k=\Upsilon_k x_{k-1} \in Y$ for $k=1,\dots,N$. Let $\widetilde{\Upsilon}_k$ be an approximation of $\Upsilon_k$ of the form $\widetilde{\Upsilon}_k= \Sigma_k \Upsilon_k$, where $\Sigma_k$ is an unitary operator on $X$ that satisfies
  $$\left\| (\Sigma_k - I)|_Y\right\|_{\Lin(Y,X)} < \varepsilon_k, \quad \text{for some }\varepsilon_k>0.$$
  Then the approximate trajectory $\tilde{x}_0=x_0$, $\tilde{x}_k=\widetilde{\Upsilon}_k \tilde{x}_{k-1}$ satisfies
\begin{equation*}
    \left\| x_n-\tilde{x}_n \right\| < \sum_{k=1}^n \varepsilon_k,\qquad \text{ for }n =1,\dots,N.
\end{equation*}

\end{lemma}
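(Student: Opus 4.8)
The plan is to prove the estimate by induction on $n$, exploiting the unitarity of the operators $\Upsilon_k$ in an essential way so that errors do not get amplified when propagated forward. The key observation is that although $\Sigma_k$ is only controlled on the subspace $Y$, at the $k$-th step we only ever apply it to a vector that is close to $x_{k-1}\in Y$, so we may split the error into a part living in $Y$ (where the estimate on $\Sigma_k - I$ applies) and a part measured by the accumulated error so far (on which we use that $\Sigma_k$ is a \emph{global} unitary, hence norm-preserving on all of $X$).

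Concretely, write $\tilde x_n - x_n = \widetilde{\Upsilon}_n \tilde x_{n-1} - \Upsilon_n x_{n-1} = \Sigma_n \Upsilon_n \tilde x_{n-1} - \Upsilon_n x_{n-1}$. I would insert the intermediate term $\Sigma_n \Upsilon_n x_{n-1} = \Sigma_n x_n$ and use the triangle inequality:
\begin{equation*}
\left\| \tilde x_n - x_n \right\| \le \left\| \Sigma_n \Upsilon_n (\tilde x_{n-1} - x_{n-1}) \right\| + \left\| (\Sigma_n - I) x_n \right\|.
\end{equation*}
For the first term, since $\Sigma_n$ and $\Upsilon_n$ are unitary on $X$, it equals $\| \tilde x_{n-1} - x_{n-1} \|$, which by the inductive hypothesis is bounded by $\sum_{k=1}^{n-1}\varepsilon_k$. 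For the second term, since $x_n \in Y$ and $\|x_n\| = \|x_0\| = 1$ (again by unitarity of the $\Upsilon_k$), the hypothesis $\| (\Sigma_n - I)|_Y \|_{\Lin(Y,X)} < \varepsilon_n$ gives $\| (\Sigma_n - I) x_n \| < \varepsilon_n$. Adding the two bounds yields $\| \tilde x_n - x_n \| < \sum_{k=1}^n \varepsilon_k$, which closes the induction. The base case $n=1$ is the same computation with $\tilde x_0 = x_0$, so the first term vanishes.

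I do not expect any serious obstacle here; the proof is essentially a bookkeeping argument. The only point requiring a little care is making sure the decomposition of the error is organized so that the uncontrolled direction (the component of $\tilde x_{n-1} - x_{n-1}$ outside $Y$) is always absorbed into a term on which $\Sigma_n$ acts as a full unitary — that is why it is important that $\Sigma_k$ is unitary on all of $X$ and not merely close to the identity on $Y$. One should also record at the outset that $\|x_k\| = 1$ for every $k$, since this is used when invoking the operator norm bound on $(\Sigma_k - I)|_Y$.
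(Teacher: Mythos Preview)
Your proof is correct and follows essentially the same approach as the paper: an induction that splits $\tilde x_n - x_n$ into $(\Sigma_n - I)x_n$ (controlled by the hypothesis since $x_n\in Y$ with $\|x_n\|=1$) and $\Sigma_n\Upsilon_n(\tilde x_{n-1}-x_{n-1})$ (controlled by the inductive hypothesis using unitarity). The paper merely introduces the shorthand $\hat\delta_n=\tilde x_n-x_n$ and tracks it explicitly, but the decomposition and the estimates are identical to yours.
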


\begin{proof}
We prove the lemma by induction. For $\tilde{x}_1$, we have:
$$\tilde{x}_1=\Sigma_1 \Upsilon_1 x_0= \Upsilon_1 x_0 + (\Sigma_1-I) \Upsilon_1 x_0 = x_1 + \widehat{\delta_1}$$
where $\|\widehat{\delta_1}\|_X< \varepsilon_1$ because $\Upsilon_1 x_0 \in Y$.

Now suppose that for some $n\leq N-1$ we known that $\tilde{x}_n=x_n+\widehat{\delta}_n$, with $\|\widehat{\delta_n}\|< \sum_{k=1}^{n} \varepsilon_k$. We have to prove that the same is valid for $n+1$.
$$\tilde{x}_{n+1}=\Sigma_{n+1} \Upsilon_{n+1} x_n = \Upsilon_{n+1} x_n + (\Sigma_{n+1}-I)\Upsilon_{n+1} x_n+\Sigma_{n+1} \Upsilon_{n+1} \hat{\delta}_n = x_{n+1} + \hat{\delta}_{n+1}$$
where $\|\widehat{\delta}_{n+1}\|_X< \varepsilon_{n+1}+\|\widehat{\delta}_n\|_X < \sum_{k=1}^{n+1} \varepsilon_k$. Therefore we obtain
$$\|x_n-\tilde{x}_n\|_X<  \sum_{k=1}^n \varepsilon_k,\qquad n\leq N.$$
This concludes the proof of Lemma~\ref{errorEstimation}.
\end{proof}

\section{Control of the Two Trapped Ions System}\label{ch:TTI}

In this section we present the decoupled modal decomposition of the two trapped ions system and 
we use it to prove the approximate controllability of the latter.

\subsection{Galerkin Representation}\label{ch:galerkinTTI}

For convenience, let us rewrite here the {\bf two trapped ions system} introduced in \eqref{S2IA_intro}, that is,  
\begin{equation*}
\begin{array}{rcl}
i\frac{d}{dt}\phi_{gg} & = &  (u_1 + u_{1r}a^{\dag}+ u_{1b}a) \phi_{eg} + (u_2 + u_{2r}a^{\dag}+ u_{2b}a) \phi_{ge}  ,\\
i\frac{d}{dt}\phi_{eg} & = &  (u_1^{*} + u_{1r}^{*}a+ u_{1b}^{*}a^{\dag}) \phi_{gg} + (u_2 + u_{2r}a^\dag+ u_{2b}a) \phi_{ee}  ,\\
i\frac{d}{dt}\phi_{ge} & = &  (u_1 + u_{1r}a^{\dag}+ u_{1b}a) \phi_{ee} + (u_2^{*} + u_{2r}^{*}a+ u_{2b}^{*}a^{\dag}) \phi_{gg}  ,\\
i\frac{d}{dt}\phi_{ee} & = &  (u_1^{*} + u_{1r}^{*}a+ u_{1b}^{*}a^{\dag}) \phi_{ge} + (u_2^{*} + u_{2r}^{*}a+ u_{2b}^{*}a^{\dag}) \phi_{eg}  ,\\
\end{array}
\end{equation*}
and recall that $a=\frac{1}{\sqrt{2}}\left(x+\partial_x\right)$, $a^\dag=\frac{1}{\sqrt{2}}\left(x-\partial_x\right)$. 

Choose as 
orthonormal basis of  $\left(L^2(\R)\right)^4$ the family $\{\phi_j\}_{j=1}^{\infty}$ defined by
\begin{equation}
\phi_{4j+1}  =  \begin{pmatrix} \left| j \right\rangle \\ 0 \\ 0 \\ 0 \end{pmatrix},
\phi_{4j+2}  =  \begin{pmatrix} 0 \\ \left| j \right\rangle \\ 0 \\ 0 \end{pmatrix} ,
\phi_{4j+3}  =  \begin{pmatrix} 0 \\ 0 \\ \left| j \right\rangle \\ 0 \end{pmatrix}  ,
\phi_{4j+4}  =  \begin{pmatrix} 0 \\ 0 \\ 0 \\ \left| j \right\rangle \end{pmatrix} ,
\end{equation}
where $\left| j \right\rangle$ denotes the $j$-th Hermite function, $j=0,1,2,\dots$
The orthonormal basis identifies a system of coordinates in the space of wavefunctions $\psi\in \left(L^2(\R)\right)^4$ 
through the relation 
\begin{equation}\label{basisTTI}
{\displaystyle
\psi=\sum\limits_{n=0}^\infty\left[ c_{gg}^n \begin{pmatrix} \left| n \right\rangle \\ 0 \\ 0 \\ 0 \end{pmatrix}
+c_{eg}^n \begin{pmatrix}  0\\ \left|n \right\rangle \\0 \\0\end{pmatrix}
+c_{ge}^n \begin{pmatrix}  0\\0\\ \left|n \right\rangle \\0 \end{pmatrix}
+c_{ee}^n \begin{pmatrix}  0\\0\\0\\ \left|n \right\rangle \end{pmatrix} \right].}
\end{equation}

It is useful to split the controls in their real and imaginary parts in order to write a system with real-valued controls:
\begin{equation}\label{real-imag}
\begin{array}{lcr@{\hspace{0.5mm}}lclr@{\hspace{0.5mm}}l}
  u_1 &=& v_1&+i w_1  , & u_2 & = & v_2&+i w_2 , \\
  u_{1r} &=& v_{1r}&+i w_{1r} , & u_{2r} &=& v_{2r}&+i w_{2r} , \\
  u_{1b} &=& v_{1b}&+i w_{1b} , & u_{2b} &=& v_{2b}&+i w_{2b} .
\end{array}
\end{equation}

For $j,k\in\N$, define the skew-adjoint operators ${E}_{j,k}, {F}_{j,k}: \left(L^2(\R)\right)^4\to \left(L^2(\R)\right)^4$ by their actions on the basis $\{\phi_j\}_{j=1}^{\infty}$ as follows
\begin{equation}
\begin{array}{llll}
  {E}_{j,k}\phi_j=i \phi_k ,& {E}_{j,k}\phi_k=i \phi_j ,&{F}_{j,k}\phi_j=- \phi_k ,& {F}_{j,k}\phi_k= \phi_j ,\\
   {E}_{j,k}\phi_\ell=0 ,&  {F}_{j,k}\phi_\ell=0 ,& \mbox{ for }\ell\notin\{j,k\}.
\end{array}
\end{equation}

We can rewrite system~\eqref{S2IA_intro} as 
\begin{eqnarray}\label{GS2IA}
    \frac{d}{dt}\phi &= &\big(v_1 V_1 + w_1 W_1 + v_{1r} V_{1r}+w_{1r} W_{1r}+v_{1b} V_{1b}+w_{1b} W_{1b}\\
    \nonumber& &+v_2 V_2 + w_2 W_2 + v_{2r} V_{2r}+w_{2r} W_{2r}+v_{2b} V_{2b}+w_{2b} W_{2b}\big)\phi,
\end{eqnarray}
where 
\begin{equation}\label{couplingOperators}
\begin{array}{ccrl}
    V_1&=& -&\sum_{n=0}^{\infty} \left({E}_{4n+1,4n+2} + {E}_{4n+3,4n+4} \right) , \\
    W_1&=& &\sum_{n=0}^{\infty}\left( {F}_{4n+1,4n+2} + {F}_{4n+3,4n+4} \right) ,\\
    V_{1r}&=& -&\sum_{n=0}^{\infty}\sqrt{n+1}\left( {E}_{4n+2,4n+5} +  {E}_{4n+4,4n+7}\right)  ,\\
    W_{1r}&=& &\sum_{n=0}^{\infty}\sqrt{n+1}\left( {F}_{4n+2,4n+5} +  {F}_{4n+4,4n+7}\right) ,\\
    V_{1b}&=& -&\sum_{n=0}^{\infty}\sqrt{n+1}\left( {E}_{4n+1,4n+6} +  {E}_{4n+3,4n+8}\right)  ,\\
    W_{1b}&=& -&\sum_{n=0}^{\infty}\sqrt{n+1}\left(  {F}_{4n+1,4n+6} +  {F}_{4n+3,4n+8}\right)  ,\\
    V_2&=& -&\sum_{n=0}^{\infty} \left({E}_{4n+1,4n+3} +  {E}_{4n+2,4n+4}\right)  ,\\
    W_2&=& &\sum_{n=0}^{\infty} \left({F}_{4n+1,4n+3} +  {F}_{4n+2,4n+4} \right) ,\\
    V_{2r}&=& -&\sum_{n=0}^{\infty}\sqrt{n+1}\left( {E}_{4n+1,4n+7} +  {E}_{4n+2,4n+8}\right)  ,\\
    W_{2r}&=& &\sum_{n=0}^{\infty} \sqrt{n+1}\left( {F}_{4n+1,4n+7} +  {F}_{4n+2,4n+8}\right)  ,\\
    V_{2b}&=& -&\sum_{n=0}^{\infty}\sqrt{n+1}\left( {E}_{4n+3,4n+5} +  {E}_{4n+4,4n+6}\right)  ,\\
    W_{2b}&=& -&\sum_{n=0}^{\infty} \sqrt{n+1}\left( {F}_{4n+3,4n+5} +  {F}_{4n+4,4n+6}\right)  .
\end{array}
\end{equation}

\subsection{Control of the Modal Approximations}\label{ch:modalTTI}

Let us study the controllability of {\it modal (or Galerkin) approximations} of system~\eqref{couplingOperators}, obtained by truncating the high energy levels of the system in order to obtain a finite-dimensional reduction.

For every $n\in \N$, let ${Y_n}=\span\{ \phi_j \mid j=1,\dots,n \}\subset \left(L^2(\R)\right)^4$.
The {\bf modal approximation of order ${n}$ of the two trapped ions system} is the control system in ${Y_{4n}}$ given by
\begin{equation}\label{AM2IA}
\begin{array}{ll}
    \frac{d}{dt}\phi =\hspace{-3mm}& \big(v_1 {V}_1^{(4n)} + w_1 {W}_1^{(4n)} + v_{1r} {V}_{1r}^{(4n)}+w_{1r} {W}_{1r}^{(4n)}+v_{1b} {V}_{1b}^{(4n)}+w_{1b} {W}_{1b}^{(4n)}\\
    & +v_2 {V}_2^{(4n)} + w_2 {W}_2^{(4n)} + v_{2r} {V}_{2r}^{(4n)}+w_{2r} {W}_{2r}^{(4n)}+v_{2b} {V}_{2b}^{(4n)}+w_{2b} {W}_{2b}^{(4n)}\big)\phi,
\end{array}
\end{equation}
with 
coupling operators 
defined by 
\begin{equation}\label{Galerkin}
Z_{\gamma}^{(4n)}=\Pi[{Y}_{4n}]Z_{\gamma},\quad Z_{\gamma\star}^{(4n)}=\Pi[{Y}_{4n}]Z_{\gamma\star},\qquad Z=V,W,\ \gamma=1,2,\ \star=b,r.
\end{equation}
By a useful abuse of notation, the operators $Z_{\gamma}^{(4n)},Z_{\gamma\star}^{(4n)}$ are identified in \eqref{AM2IA} with operators in $\Lin({Y_{4n}})$, while equation~\eqref{Galerkin} actually defines operators in $\Lin((L^2(\R))^4)$.

Up to the permutation in the coordinates of ${Y_{4n}}$ 
defined by 
\begin{eqnarray}
\lefteqn{P(c_{gg}^0,c_{eg}^0,c_{ge}^0,c_{ee}^0,\dots,c_{gg}^{n-1},c_{eg}^{n-1},c_{ge}^{n-1},c_{ee}^{n-1})=}\nonumber\\
&\left(c_{gg}^0,\dots,c_{gg}^{n-1},c_{eg}^0,\dots,c_{eg}^{n-1},c_{ge}^0,\dots,c_{ge}^{n-1}, c_{ee}^0,\dots,c_{ee}^{n-1}\right),\label{permutation}
\end{eqnarray}
 the coupling operators have the following matrix representations 
{\small
\begin{equation}\label{couplingMatricesTTI}
\begin{array}{@{\hspace{-1mm}}r@{\hspace{4mm}}l@{\hspace{1mm}}r@{\hspace{1mm}}l@{\hspace{1mm}}c}
P {V}_1^{(4n)} P^{-1}  = & - i
\left(
\begin{array}{c@{\hspace{0.5mm}}c|c@{\hspace{0.5mm}}c}
  & I_n& & \\
  I_n& & & \\
  \hline
  & & &I_n \\
  & & I_n& \\
\end{array}
\right)  , &
P {W}_1^{(4n)} P^{-1}  = & \left(
\begin{array}{c@{\hspace{0.5mm}}c|c@{\hspace{0.5mm}}c}
  &I_n & & \\
  \minus I_n& & & \\
  \hline
  & & &I_n \\
  & & \minus I_n& \\
\end{array}
\right)  , \\
P {V}_{1r}^{(4n)} P^{-1} = & - i\left(
\begin{array}{c@{\hspace{0.5mm}}c|c@{\hspace{0.5mm}}c}
  &D^T & & \\
  D& & & \\
  \hline
  & & &D^T \\
  & &D & \\
\end{array}
\right)  , &
P {W}_{1r}^{(4n)} P^{-1}  = & \left(
\begin{array}{c@{\hspace{0.5mm}}c|c@{\hspace{0.5mm}}c}
  & D^T & & \\
  \minus D& & & \\
  \hline
  & & & D^T \\
  & & \minus D & \\
\end{array}
\right) , \\
P {V}_{1b}^{(4n)} P^{-1}  = & -i \left(
\begin{array}{c@{\hspace{0.5mm}}c|c@{\hspace{0.5mm}}c}
  &D & & \\
  D^T& & & \\
  \hline
  & & &D \\
  & & D^T& \\
\end{array}
\right)  , &
P {W}_{1b}^{(4n)} P^{-1}  = & \left(
\begin{array}{c@{\hspace{0.5mm}}c|c@{\hspace{0.5mm}}c}
  & D& & \\
  \minus D^T& & & \\
  \hline
  & & &D \\
  & & \minus D^T& \\
\end{array}
\right) , \\
P {V}_2^{(4n)} P^{-1}  = & -i
\left(
\begin{array}{c@{\hspace{0.5mm}}c|c@{\hspace{0.5mm}}c}
  & &I_n & \\
  & & &I_n \\
  \hline
  I_n& & & \\
  & I_n& & \\
\end{array}
\right)  ,&
P {W}_2^{(4n)} P^{-1}  = & \left(
\begin{array}{c@{\hspace{0.5mm}}c|c@{\hspace{0.5mm}}c}
  & & I_n& \\
  & & &I_n \\
  \hline
  \minus I_n& & & \\
  &\minus I_n & & \\
\end{array}
\right) , \\
P {V}_{2r}^{(4n)} P^{-1}  = & -i\left(
\begin{array}{c@{\hspace{0.5mm}}c|c@{\hspace{0.5mm}}c}
  & & D& \\
  & & & D\\
  \hline
  D^T& & & \\
  & D^T& & \\
\end{array}
\right)  , &
P {W}_{2r}^{(4n)} P^{-1}  = & \left(
\begin{array}{c@{\hspace{0.5mm}}c|c@{\hspace{0.5mm}}c}
  & & D& \\
  & & &D \\
  \hline
  \minus D^T& & & \\
  & \minus D^T& & \\
\end{array}
\right)  , \\
P {V}_{2b}^{(4n)} P^{-1}  = & -i\left(
\begin{array}{c@{\hspace{0.5mm}}c|c@{\hspace{0.5mm}}c}
  & & D^T & \\
  & & & D^T\\
  \hline
  D& & & \\
  &D & & \\
\end{array}
\right)  , &
P {W}_{2b}^{(4n)} P^{-1}  = & \left(
\begin{array}{c@{\hspace{0.5mm}}c|c@{\hspace{0.5mm}}c}
  & & D^T& \\
  & & &D^T \\
  \hline
  \minus D& & & \\
  &\minus D & & \\
\end{array}
\right)  ,
\end{array}
\end{equation}
}
where the matrix $D$ is defined as
\begin{equation}\label{defD}
D=\left(
\begin{array}{ccccc}
    0 & \sqrt{1} & & & \\
     & 0 & \sqrt{2} & & \\
    &  & \ddots & \ddots & \\
    & & & 0 & \sqrt{n-1} \\
    & & &  & 0 \\
\end{array}
\right).
\end{equation}

\begin{proposition}[Exact Controllability of the Modal Approximations of the Two Trapped Ions System]\label{modcont2IA}
    Let $n\geq 3$, $M>0$ and $(\phi_0,\phi_T) \in {Y_{4n}}\times {Y_{4n}}$, with $\|\phi_0\|=\|\phi_T\|$. Then, there exists $T_0>0$ such that for any $T\geq T_0$ we can find 
    controls in $L^\infty\left((0,T),\C\right)$ with $L^\infty$-norm smaller than $M$  
    such that the solution of system~(\ref{AM2IA}) with initial data $\phi(0)=\phi_0$ satisfies $\phi(T)=\phi_T$.
\end{proposition}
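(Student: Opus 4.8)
The plan is to view the modal approximation \eqref{AM2IA} as a finite‑dimensional \emph{driftless} bilinear control system on the sphere $S_r=\{\phi\in Y_{4n}:\|\phi\|=r\}$ with $r=\|\phi_0\|$ --- the norm being conserved along every trajectory because all coupling operators in \eqref{AM2IA} are skew‑adjoint --- and to verify the Lie algebra rank condition for it. Granting this, the conclusion follows from classical geometric control theory (orbit theorem, Chow--Rashevskii theorem): since the system is driftless the reachable set from $\phi_0$ is the whole connected orbit, which is all of $S_r$ once the rank condition holds; as $S_r$ is compact there are a time $\hat T>0$ and piecewise constant controls steering $\phi_0$ to $\phi_T$ exactly, say with $L^\infty$-bound $K$ (possibly $K>M$); and then the time‑rescaled controls $\tilde u(t)=\tfrac{\hat T}{T}\,u\big(\tfrac{\hat T}{T}t\big)$, $t\in[0,T]$, still steer $\phi_0$ to $\phi_T$, are still piecewise constant, and have $L^\infty$-bound $\tfrac{\hat T}{T}K$, which is $<M$ as soon as $T>T_0:=\hat T\max(1,K/M)$. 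Thus the entire content of the proposition is the rank condition, and since the twelve generators in \eqref{AM2IA} are skew‑adjoint and traceless (each is a sum of off‑diagonal $E_{j,k}$'s or $F_{j,k}$'s), it suffices to prove that the Lie algebra $\mathfrak g\subseteq\mathfrak{su}(4n)$ they generate is all of $\mathfrak{su}(4n)$ --- which acts transitively on the unit sphere of $\C^{4n}$.

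To organize the Lie‑algebra computation I would use the identification $Y_{4n}\cong\C^2\otimes\C^2\otimes\C^n$, in which the two $\C^2$ factors carry the internal states of the two ions and $\C^n$ is the truncated motional mode. Under the permutation \eqref{permutation} the operators \eqref{couplingMatricesTTI} are, up to scalars, of two types: the carrier operators $V_\gamma,W_\gamma$ act as a Pauli‑type operator on the $\gamma$‑th ion and as the identity on the other two factors; the sideband operators $V_{\gamma\star},W_{\gamma\star}$ act as a spin flip on the $\gamma$‑th ion, as the identity on the other ion, and as the weighted shift $D$ or its transpose $D^\dag$ (see \eqref{defD}) on the motional factor. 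The first step is then to extract, from $[V_1,W_1]$, $[V_2,W_2]$ and their brackets with $V_\gamma,W_\gamma$, the two commuting subalgebras $\mathfrak{su}(2)\otimes I\otimes I$ and $I\otimes\mathfrak{su}(2)\otimes I$; in particular this produces the ``$\sigma_z$'' generator of each ion.

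The second, and central, step is to bring in the sidebands. Bracketing a ``$\sigma_z$'' generator against a sideband operator separates its $D$‑part from its $D^\dag$‑part; bracketing a $D$‑sideband against a $D^\dag$‑sideband then produces, on the motional factor, operators built from $[D,D^\dag]=\mathrm{diag}(1,\dots,1,-(n-1))$, and iterating such brackets with $D$ and $D^\dag$ --- whose weights $\sqrt1,\dots,\sqrt{n-1}$ are pairwise distinct and nonzero --- generates all of $\mathfrak{su}(n)$ on the motional factor (the familiar argument for controllability of a truncated quantum harmonic oscillator). Finally, bracketing an ion‑$1$ sideband against an ion‑$2$ sideband yields entangling operators of the form $\sigma^{(1)}\otimes\sigma^{(2)}\otimes[A,B]$ with $[A,B]\ne0$; combined with the two ion subalgebras and with the full motional $\mathfrak{su}(n)$ already obtained, these let one build $\mathfrak{su}(2)$, $\mathfrak{su}(2)$ and $\mathfrak{su}(n)$ tensored with identities together with all of their products, hence all of $\mathfrak{su}(4n)$, by the usual argument for universality of such a generating set. (Equivalently one may phrase the whole computation in terms of the ``coupling graph'' on $\{1,\dots,4n\}$, which is connected, the point being to peel individual matrix units out of the sums in \eqref{couplingOperators}.)

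The step I expect to be the real obstacle is precisely this bookkeeping: one must check that the iterated brackets leave no accidental common invariant subspace --- i.e. that the weights $\sqrt{n+1}$ occurring in \eqref{couplingOperators} are ``generic enough'' to reach every matrix unit, and that the two ions are genuinely coupled through the shared motional mode rather than splitting off into an invariant block decomposition. This is exactly where the hypothesis $n\ge 3$ enters (for $n=1$ the sideband operators vanish identically and the system collapses to $\mathfrak{su}(2)\oplus\mathfrak{su}(2)$ on $\C^4$, which is not transitive on $S^7$, so some lower bound on $n$ is unavoidable), and it calls for a careful, though entirely elementary, bracket computation.
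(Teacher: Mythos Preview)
Your reduction is exactly the paper's: invoke Chow--Rashevskii on the driftless system (Theorem~\ref{teo3jurdjevic}), so that the whole content becomes showing that the Lie algebra generated by the twelve skew-adjoint generators equals $\mathfrak{su}(4n)$. Your time-rescaling argument for the $L^\infty$ bound $M$ is correct and in fact more explicit than what the paper writes.

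Where you diverge is in the organization of the Lie-algebra computation. The paper (Appendix, Proposition~\ref{contacci}) does \emph{not} work directly with the triple tensor factorization $\C^2\otimes\C^2\otimes\C^n$. Instead it bootstraps from the single-ion Law--Eberly result (Proposition~\ref{contacciEL}, taken from \cite{Yuan2007}): the four operators $\{V^{(2n)},W^{(2n)},V_\star^{(2n)},W_\star^{(2n)}\}$ already generate $\mathfrak{su}(2n)$ on $\C^2\otimes\C^n$. In the two-ion system, the ion-$1$ controls alone act as $\mathrm{diag}(A,A)$ with $A\in\mathfrak{su}(2n)$, and the ion-$2$ controls give the permuted block pattern in~\eqref{f-t}; the rest of the proof is a short sequence of block-matrix bracket manipulations (\eqref{bl-d}--\eqref{enough}) that combine these two embeddings of $\mathfrak{su}(2n)$ into $\mathfrak{su}(4n)$. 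This buys modularity: the delicate ``weights $\sqrt{1},\dots,\sqrt{n-1}$ are generic enough'' step is entirely absorbed into the cited single-ion result, and the two-ion argument is pure linear algebra with $2\times 2$ block patterns.

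Your route---extract the two ion $\mathfrak{su}(2)$'s from the carriers, build the motional $\mathfrak{su}(n)$ from sideband brackets, then entangle---is a legitimate alternative and is the standard quantum-information ``local unitaries plus one entangler'' paradigm. One caution in your sketch: the sideband brackets never produce $I\otimes I\otimes A$ directly; every operator you generate from them carries a nontrivial Pauli on at least one ion factor, so ``the full motional $\mathfrak{su}(n)$'' really means $\sigma\otimes I\otimes A$ for all $A\in\mathfrak{su}(n)$, and the final universality step has to absorb that. This is exactly the bookkeeping you flag as the real obstacle, and it is where the paper's approach, by treating each $\C^2\otimes\C^n$ as an opaque $\mathfrak{su}(2n)$ module, is cleaner.
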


\begin{remark}
The boundedness on the controls implies that the infimum of the controllability time is positive (at it happens for the single ion case, see  \cite{Law1996}). 
\end{remark}

The proof is based on the classical Chow--Rashevskii theorem, recalled here below.  (For a proof, see, e.g., \cite{jurdjevic1996}.) Recall that, given a set $\mathcal {F}$ of smooth vector fields on a manifold $M$,  $\mathrm{Lie}_x\mathcal{F}$ denotes the evaluation at a point $x\in M$ of the Lie algebra $\mathrm{Lie}\mathcal{F}$ generated by $\mathcal{F}$. 
\begin{theorem}\label{teo3jurdjevic}
  Assume that a driftless control affine system $\dot x=\sum_{j=1}^m u_j X_j(x)$, $(u_1,\dots,u_m)\in U\subset \R^m$,  defined on a finite-dimensional compact manifold $M$ satisfies $\mathrm{Lie}_x\{X_1,\dots,X_m\}$ $=T_x M$ for all $x$ in $M$ and that the convex hull of $U$ contains the origin in its interior. Then there exists $T_0>0$ such that for any $T\geq T_0$ and any open connected set $\Omega$ in $M$, any two points of $\Omega$ can be connected by  a trajectory $x:[0,T]\to M$ of the control system with support  in $\Omega$.
\end{theorem}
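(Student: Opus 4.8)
This is a classical statement, so I only sketch the architecture. The plan is to reduce it to a small-time local controllability property and then to a soft compactness and concatenation argument. \emph{The substantive step} is to show that the rank condition $\mathrm{Lie}_x\{X_1,\dots,X_m\}=T_xM$, together with $0\in\mathrm{int}\,\mathrm{conv}(U)$, implies small-time local controllability at every point: for every $x\in M$ and every $\tau>0$ there is a neighborhood $V$ of $x$ such that $V\subset\mathcal{R}^{\le\tau}(x)$ and $x\in\mathcal{R}^{\le\tau}(y)$ for all $y\in V$, where $\mathcal{R}^{\le t}(\cdot)$ denotes the set of points reachable in time at most $t$. I would prove this by the classical iterated-bracket construction underlying Chow's theorem: concatenating flows of the $X_j$ produces motions whose leading-order effect is along the $X_j$ and, through commutator maneuvers, along the iterated brackets, so that under the rank condition the end-point map of sufficiently many short concatenated pieces is a submersion onto a neighborhood of $x$. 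Since $U$ need not be symmetric, the pieces run in the $-X_j$ directions are realized by chattering controls whose running average is $-\rho e_j\in\mathrm{conv}(U)$ for a small $\rho>0$ (available because $0$ is interior to $\mathrm{conv}(U)$), the approximation being legitimate by the classical relaxation theorem (a driftless affine system and its convexification have reachable sets with the same closure). The symmetric ``$x\in\mathcal{R}^{\le\tau}(y)$'' half follows since the construction works equally with the roles of $x$ and $y$ exchanged.

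Granting this, the full reachable set $\mathcal{R}(x)=\bigcup_{t>0}\mathcal{R}^{\le t}(x)$ is open (every reachable point has a reachable neighborhood) and closed (if $y\in\overline{\mathcal{R}(x)}$, pick $v$ in the small-time-controllability neighborhood of $y$ with $v\in\mathcal{R}(x)$ and concatenate a trajectory from $x$ to $v$ with one from $v$ to $y$); since $M$ is connected, $\mathcal{R}(x)=M$ for every $x$. To get a single $T_0$, fix $x_0\in M$: the open sets $\mathcal{R}^{\le t}(x_0)$ increase to $M$, so $\mathcal{R}^{\le T_1}(x_0)=M$ for some $T_1$ by compactness, and applying this to the time-reversed system gives $T_2$ with $x_0\in\mathcal{R}^{\le T_2}(x)$ for all $x$; hence every target is reachable from every initial point in time at most $T_0:=T_1+T_2$. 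For a prescribed $T\ge T_0$ one reaches $y$ in some time $t'\le T$ and then spends the remaining $T-t'$ on loops based at $y$ --- by small-time local controllability loops at $y$ of every sufficiently small duration exist, and concatenating them realizes every duration above a fixed threshold --- so, after possibly enlarging $T_0$, the trajectory ends exactly at $y$ at time exactly $T$.

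Finally, since the rank condition is local and every step above is local in nature, the argument applies verbatim with $M$ replaced by the open connected submanifold $\Omega$, which yields trajectories with support in $\Omega$. The main obstacle is the first step: turning the infinitesimal rank condition into genuine finite-time, two-sided local surjectivity of the end-point map, and doing so without symmetry of $U$ by relaxing the controls; the remainder is a routine compactness-and-concatenation argument.
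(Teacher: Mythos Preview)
The paper does not prove this theorem at all: it is stated as a classical fact, with the proof deferred to the reference \cite{jurdjevic1996}. So there is no ``paper's own proof'' to compare against; your sketch is an attempt to supply what the paper deliberately omits.

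Your outline follows the standard route one finds in textbooks such as the cited one: bracket-generating condition $\Rightarrow$ small-time local controllability via commutator maneuvers, then open-and-closed plus connectedness to get full reachability, then compactness for a uniform time, then waiting/looping to hit an exact time $T$. That architecture is correct.

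One point deserves more care. In your last paragraph you say the argument ``applies verbatim with $M$ replaced by the open connected submanifold $\Omega$''. But your uniform-time step uses compactness of $M$ explicitly (covering by finitely many reachable sets to produce $T_1,T_2$), and $\Omega$ need not be compact. As stated in the paper, $T_0$ is asserted to be independent of $\Omega$, so one cannot simply rerun the compactness argument inside each $\Omega$ separately and then take a supremum. If you want a self-contained proof of the statement exactly as written, you must argue differently for the $\Omega$ part---for instance, first obtain $T_0$ on $M$ and then show that the specific trajectories produced by the Chow construction (short concatenations of flows) stay in any prescribed neighborhood of a connecting path, hence in $\Omega$ when the two endpoints lie in $\Omega$. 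Your sketch as written does not address this, and ``verbatim'' is not accurate here.
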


The proof of Proposition~\ref{modcont2IA}, which can be found in the appendix, 
consists then in the computation of the iterated Lie brackets of the vector fields appearing in system~(\ref{AM2IA}). 
We show in the appendix, moreover, that it is possible to set some of the controls appearing in \eqref{AM2IA} identically equal to zero and still recover the controllability of the modal approximation (for every $n\geq 3$).

\begin{figure}
\begin{center}
\includegraphics[width=0.45\linewidth]{./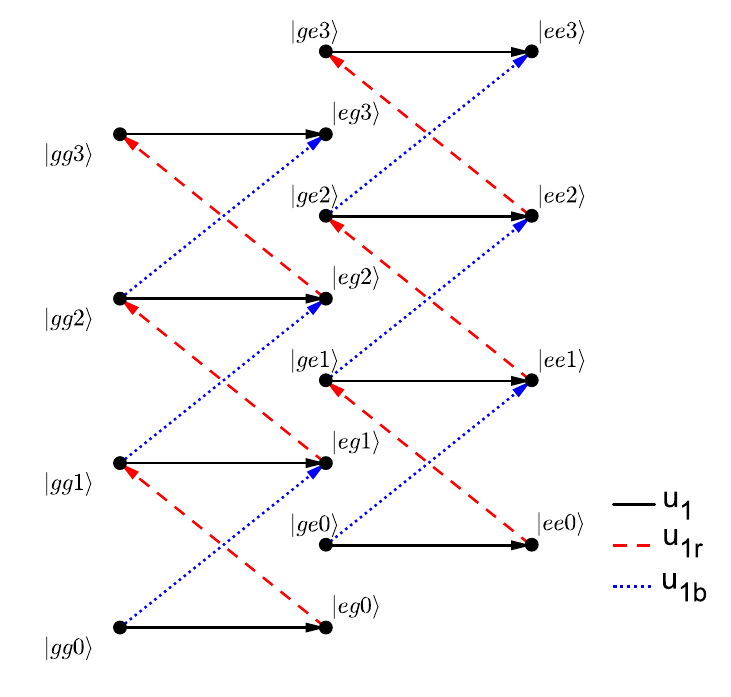}
\includegraphics[width=0.45\linewidth]{./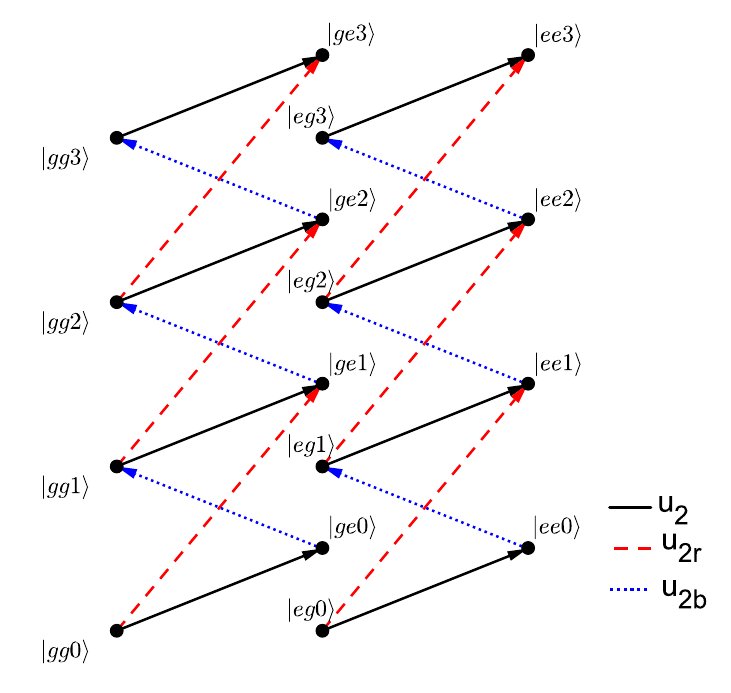}\\
\end{center}
\caption{Transition diagram between energy levels of the system by considering a finite number of Fock states in the oscillator. The lines of the same type represent the action of the same control}\label{ion2}
\end{figure}

\subsection{The Decoupled Modal Approximation}\label{ch:decoupledTTI}

The inconvenient of using the modal approximation \eqref{AM2IA} to study system~\eqref{S2IA_intro} is that whenever we apply the controls associated with the blue lasers ($u_{1b}$ and $u_{2b}$) or the red lasers ($u_{1r}$ and $u_{2r}$), some population is transmitted from the phonon level $\left|n\right\rangle$ to the phonon level $\left|n+1\right\rangle$ (see Figure~\ref{ion2}). The truncation defining the modal approximation does not keep track of this transfer, hence the control laws computed 
on the model approximations do not work for system~\eqref{S2IA_intro}. 
To overcome this issue we use a different truncation,  the {\bf decoupled modal approximation} of system~\eqref{S2IA_intro}, which depends on the spectrum of the coupling operators. 
Let us compare this approximation with the one proposed in \cite{metalemma}: the goal of both approximations is to guarantee 
that the admissible motions of the approximate system are also admissible, in an approximate way, in the full system. However, the technical 
arguments leading to the two approximations are different: in \cite{metalemma} (see also \cite{Boscain2012,Chambrion-periodic,Chambrion2009}) the approximation is based on the dynamical and spectral analysis of the drift Hamiltonian (exploiting the non-resonances of its spectrum), while in the case considered here the drift is null and the non-resonances are exploited separately for different values of the control parameters and then composed together.

The following proposition lists the main spectral properties of the operators defined  in (\ref{couplingOperators}).
\begin{proposition}[Spectral Properties of the Coupling Operators]\label{specprop1}
    Let $U$ be one of the operators $V_{1}, W_{1}, V_{2}, W_{2}$. Then the spectrum of $U$ satisfies
    \begin{itemize}
      \item[a)] the eigenvalues of $U$ are $i$ and $-i$;
      \item[b)] for each $n\in\N$ the space ${Y_{4 n}}$ is invariant under $U$. 
    \end{itemize}
    If $U$   is one of the operators $V_{1r}, W_{1r}, V_{1b}, W_{1b}, V_{2r},W_{2r}, V_{2b},W_{2b}$ 
    then the spectrum of $U$ satisfies
    \begin{itemize}
      \item[a$'$)] the eigenvalues of $U$ are $\{\pm i \sqrt{j}\}_{j=0}^\infty$;
      \item[b$'$)] $\span\{x\mid Ux=\lambda x, |\lambda|< \sqrt{n}\}\subset {Y_{4n}}$, $n\in\N$;
      \item[c$'$)] $\span\{x\mid Ux=\lambda x, |\lambda|> \sqrt{n}\}\subset { Y_{4n}}^\perp$, $n\in \N$.
    \end{itemize}

\end{proposition}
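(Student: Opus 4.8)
The plan is to treat separately the two families of operators. For $U \in \{V_1, W_1, V_2, W_2\}$, I would work directly from the block matrix representations in \eqref{couplingMatricesTTI}. After the permutation $P$, each of these operators is (up to the scalar $-i$ or the sign) a block-diagonal matrix whose blocks are $2n\times 2n$ matrices of the form $\begin{pmatrix} 0 & I_n \\ \pm I_n & 0\end{pmatrix}$ (for $V_1,W_1$) or the analogous $4n\times 4n$ permutation-type matrix mixing the $c_{gg},c_{ge}$ and $c_{eg},c_{ee}$ blocks (for $V_2,W_2$). A matrix of the form $\begin{pmatrix} 0 & I \\ I & 0\end{pmatrix}$ has eigenvalues $\pm 1$, so multiplying by $-i$ gives eigenvalues $\pm i$; the matrix $\begin{pmatrix} 0 & I \\ -I & 0\end{pmatrix}$ has eigenvalues $\pm i$, and $W_1$ carries no extra $i$, so again the eigenvalues are $\pm i$. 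This proves (a). For (b), one checks from \eqref{couplingOperators} that each $E_{j,k}$ or $F_{j,k}$ appearing in $V_1,W_1,V_2,W_2$ couples indices $j,k$ that lie in the same block $\{4n+1,4n+2,4n+3,4n+4\}$ — i.e. the oscillator index does not change — hence each $Y_{4n}$ is preserved. This is a finite inspection of the index pairs in the four relevant sums.

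For $U \in \{V_{1r}, W_{1r}, V_{1b}, W_{1b}, V_{2r}, W_{2r}, V_{2b}, W_{2b}\}$, I would again use \eqref{couplingMatricesTTI}: each such operator is, after $P$, block-diagonal with blocks built from the bidiagonal matrix $D$ of \eqref{defD} and its transpose, in the $2\times 2$-block patterns $\begin{pmatrix} 0 & D^T \\ D & 0\end{pmatrix}$, $\begin{pmatrix} 0 & D \\ D^T & 0\end{pmatrix}$, etc. The key algebraic fact is that the eigenvalues of $\begin{pmatrix} 0 & D^T \\ D & 0\end{pmatrix}$ are $\pm$ the singular values of $D$; since $D$ in \eqref{defD} is the truncation of the infinite bidiagonal operator with entries $\sqrt{1},\sqrt{2},\sqrt{3},\dots$ (this is exactly $a$ restricted to Hermite functions, so $D D^T$ and $D^T D$ are essentially the number operator), the singular values of the full infinite operator are $\{\sqrt{j}\}_{j=0}^\infty$. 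Incorporating the scalar $-i$ (for the $V$'s) or the bare real matrix (for the $W$'s, which already have eigenvalues on the imaginary axis because of the antisymmetric sign pattern $\begin{pmatrix}0 & D^T \\ -D & 0\end{pmatrix}$), the eigenvalues become $\{\pm i\sqrt{j}\}_{j=0}^\infty$, giving (a$'$). Properties (b$'$) and (c$'$) I would get by identifying the eigenvectors explicitly: an eigenvector of $\begin{pmatrix}0 & D^T \\ D & 0\end{pmatrix}$ with eigenvalue of modulus $\sqrt{j}$ has components supported on Hermite indices $\leq j$ (one needs $\le j$, coming from the structure of $D$, $D^T$ as raising/lowering), so it lies in $Y_{4n}$ as soon as $j \le n-1$, i.e. when $|\lambda|=\sqrt{j}<\sqrt{n}$; the complementary eigenvectors lie in $Y_{4n}^\perp$. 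Since skew-hermitian operators with compact resolvent have an orthonormal eigenbasis, the span statements in (b$'$), (c$'$) follow by closing up over all such eigenvectors.

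The main obstacle is being careful about the \emph{interaction between the truncation index $n$ and the singular-value threshold}: the finite matrix $D$ in \eqref{defD} has size depending on $n$, whereas the statements (a$'$), (b$'$), (c$'$) concern the infinite-dimensional operators $U$ on $(L^2(\R))^4$. I would therefore not argue through the truncations at all but directly on the infinite operators: write $U$ in the permuted Hermite basis as an infinite block operator with blocks $a$ and $a^\dagger$ (creation/annihilation acting on the oscillator factor), use $a\,|j\rangle = \sqrt{j}\,|j-1\rangle$ and $a^\dagger |j\rangle = \sqrt{j+1}\,|j+1\rangle$ to diagonalize the $2\times 2$ block operator $\begin{pmatrix} 0 & a^\dagger \\ a & 0\end{pmatrix}$ (whose square is $\mathrm{diag}(a^\dagger a, a a^\dagger) = \mathrm{diag}(N, N+1)$ with $N$ the number operator), and read off eigenvalues $\pm i\sqrt{j}$ together with eigenvectors whose oscillator-support is $\{|j\rangle\}$ or $\{|j-1\rangle,|j\rangle\}$. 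Tracking which Hermite indices occur in the eigenvector for each eigenvalue modulus then gives the inclusions $Y_{4n}$ / $Y_{4n}^\perp$ cleanly. The remaining work — checking that the twelve operators in \eqref{couplingOperators} really do reduce, block by block, to these $2\times 2$ patterns in $a, a^\dagger$, and bookkeeping the various sign and transpose placements — is routine but must be done consistently across all cases.
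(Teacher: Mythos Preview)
Your proposal is correct and follows essentially the same approach as the paper: direct inspection of the operators' action in the Hermite basis, reducing to $2\times 2$ block patterns in $a$ and $a^\dagger$ and tracking the oscillator indices that support each eigenvector. The paper's own proof is a two-sentence sketch recording only the key observations---that each $V_\gamma,W_\gamma$ preserves every four-dimensional block $\span\{\phi_j\mid j=4n+1,\dots,4(n+1)\}$, and that for the shifted operators the eigenspace at modulus $\sqrt{n}$ lies in $\span\{\phi_j\mid j=4(n-1)+1,\dots,4(n+1)\}$---so your more explicit computation (squaring to $\mathrm{diag}(N,N{+}1)$ and reading off the support) is exactly the detail one would fill in.
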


The proof follows directly from the definition of the operators given in (\ref{couplingOperators}), noticing that if $U$ is one of the operators $V_{1}, W_{1}, V_{2}, W_{2}$ then each space 
$\span\{\phi_j\mid j=4n+1,\dots,4(n+1)\}$, $n\in \N$,  is invariant under $U$, while if 
$U$   is one of the operators $V_{1r}, W_{1r}, V_{1b}, W_{1b}, V_{2r},W_{2r}, V_{2b},W_{2b}$ then 
$$\span\{x\mid Ux=\lambda x, |\lambda|= \sqrt{n}\}\subset \span\{\phi_j\mid j=4(n-1)+1,\dots,4(n+1)\}$$
for every $n\in\N$.

We apply in the following the construction of Section~\ref{definitions} to the operators $V_{1b},W_{1b},V_{1r},W_{1r},V_{2b},W_{2b},V_{2r},W_{2r}$.
Let $m \in\N$ and write $\omega_j = \sqrt{j-1}$ for $j\geq 1$. 
Let us associate with $\{\omega_j\}_{j=1}^\infty$ the integer $N(m)$ and  the frequencies $\nu_j$, $i=1,\dots,N(m)$, as detailed  in Section~\ref{definitions}.

Given $n\in\N$, set $m=n+1$ and define the {\bf decoupled modal approximation of order $n$ of the two trapped ions system} as the control system in 
${Y_{4n}}$ 
with $8 {N(m)} + 4$ controls
given by
\begin{equation}\label{AMDS2IA}
\begin{array}{ccl}
    \frac{d}{dt}\phi & =&  \Big(v_1 {V}_1^{(4n)} + w_1 {W}_1^{(4n)} +v_2 {V}_2^{(4n)} + w_2 {W}_2^{(4n)}  \\
        & &+\sum\limits_{j=1}^{N(m)}  \Big(v_{1r,j} {V}_{1r,j}^{(4n)}+w_{1r,j} {W}_{1r,j}^{(4n)}+v_{1b,j} {V}_{1b,j}^{(4n)}+w_{1b,j} {W}_{1b,j}^{(4n)}\\
        & & 
        +\ v_{2r,j} {V}_{2r,j}^{(4n)}+w_{2r,j} {W}_{2r,j}^{(4n)}+v_{2b,j} {V}_{2b,j}^{(4n)}+w_{2b,j} {W}_{2b,j}^{(4n)}\Big) \Big) \phi,
\end{array}
\end{equation}
where $Z_{\gamma}^{(4n)}$, for $Z=V,W$ and $\gamma=1,2$, are defined as in \eqref{Galerkin} and  
\begin{equation}\label{couplingOperators_DMA}
Z_{\gamma\star,j}^{(4n)}=
Z_{\gamma\star}\Pi[A_{R^m(\nu_j)}],\qquad Z=V,W,\ \gamma=1,2,\ \star=b,r.
\end{equation}

Notice that, by Proposition~\ref{specprop1} point b$'$), since $\sqrt{n}=\omega_{n+1}=\omega_m$, the operators 
$Z_{\gamma\star,j}^{(4n)}$ are indeed well-defined operators in $\Lin({Y_{4n}})$.  By Proposition~\ref{specprop1} point c$'$), moreover,
\begin{equation}\label{telescopic}
Z_{\gamma\star}^{(4n)}=\sum_{j=1}^{N(m)}Z_{\gamma\star,j}^{(4n)},\qquad Z=V,W,\ \gamma=1,2,\ \star=b,r.
\end{equation}

\begin{proposition}[Exact Controllability of the Decoupled Modal Approximation]\label{controlDecoupled}
    Let $M>0$ and $\left(\phi_0,\phi_T\right)\in \C^{4n}\times\C^{4n}$, $n\in \N$, 
    with $\|\phi_0\|=\|\phi_T\|$. Then, there exists $T_0>0$ such that for any $T\geq T_0$ we can find controls 
    $z_{\gamma},z_{\gamma\star,j}\in L^\infty\left((0,T),\R\right)$, $z=v,w$, $\gamma=1,2$, $\star=b,r$, $j=1,\dots,N(n+1)$, with $L^\infty$-norm smaller than $M$ such that the corresponding solution of system~(\ref{AMDS2IA}) with initial data $\phi(0)=\phi_0$ satisfies $\phi(T)=\phi_T$. 
\end{proposition}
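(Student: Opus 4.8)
The plan is to deduce the statement directly from Proposition~\ref{modcont2IA} by exhibiting the decoupled modal approximation \eqref{AMDS2IA} as an \emph{enlargement}, in the control variables, of the ordinary modal approximation \eqref{AM2IA}: every trajectory of \eqref{AM2IA} will be realized as a trajectory of \eqref{AMDS2IA} driven by controls obeying the same bound. The key algebraic input is the telescopic identity \eqref{telescopic}, $Z_{\gamma\star}^{(4n)}=\sum_{j=1}^{N(m)}Z_{\gamma\star,j}^{(4n)}$ with $m=n+1$, for $Z\in\{V,W\}$, $\gamma\in\{1,2\}$, $\star\in\{b,r\}$, which is precisely what the spectral splitting \eqref{couplingOperators_DMA} guarantees via Proposition~\ref{specprop1}, point c$'$).

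First I would fix $n\ge 3$, together with $M>0$ and $(\phi_0,\phi_T)$ with $\|\phi_0\|=\|\phi_T\|$, and apply Proposition~\ref{modcont2IA} to obtain $T_0>0$ and, for each $T\ge T_0$, controls of $L^\infty$-norm smaller than $M$ steering $\phi_0$ to $\phi_T$ exactly along \eqref{AM2IA}; writing them in real and imaginary parts as in \eqref{real-imag} produces real-valued controls $v_1,w_1,v_2,w_2,v_{1r},w_{1r},\dots,v_{2b},w_{2b}$, still of $L^\infty$-norm smaller than $M$. Next I would define controls for \eqref{AMDS2IA} by keeping $v_1,w_1,v_2,w_2$ unchanged and setting $z_{\gamma\star,j}(t):=z_{\gamma\star}(t)$ for every $j=1,\dots,N(m)$ and $z\in\{v,w\}$. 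Plugging these into \eqref{AMDS2IA} and using \eqref{telescopic} to collapse each sum $\sum_j z_{\gamma\star}Z_{\gamma\star,j}^{(4n)}=z_{\gamma\star}Z_{\gamma\star}^{(4n)}$, one sees that the time-dependent generator of \eqref{AMDS2IA} coincides pointwise in $t$ with that of \eqref{AM2IA}; hence the two Cauchy problems with datum $\phi_0$ have the same solution, which reaches $\phi_T$ at time $T$. Since $\|z_{\gamma\star,j}\|_{L^\infty}=\|z_{\gamma\star}\|_{L^\infty}<M$, the constructed controls are admissible, which proves the proposition for $n\ge 3$ with the same $T_0$ provided by Proposition~\ref{modcont2IA}. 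The remaining low-dimensional cases $n\in\{1,2\}$, which are not used in the proof of Theorem~\ref{mainTh} (where \eqref{AMDS2IA} is invoked only for large $n$), can be settled by the same kind of direct iterated-bracket inspection carried out in the appendix for \eqref{AM2IA}.

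I do not expect a genuine obstacle in this argument: the decoupled modal approximation was built exactly so that it dominates the modal approximation while keeping the control amplitudes bounded, so controllability transfers mechanically. The substantive difficulties sit elsewhere — the explicit iterated-bracket computation behind Proposition~\ref{modcont2IA}, and, downstream of the present proposition, the transfer of these finite-dimensional control laws to the infinite-dimensional system \eqref{S2IA_intro}, where the Approximate Decoupling Theorem~\ref{contcontrol} (used to emulate each $Z_{\gamma\star,j}^{(4n)}$ by a long run of the true flow) and the error bound of Lemma~\ref{errorEstimation} are the real tools.
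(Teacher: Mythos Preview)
Your argument is correct and is exactly the paper's own proof: the paper observes in one line that the set of controlled operators in \eqref{AM2IA} is contained in that of \eqref{AMDS2IA} via the telescopic identity \eqref{telescopic}, which is precisely your ``set $z_{\gamma\star,j}:=z_{\gamma\star}$ for all $j$'' construction. Your remark about the cases $n\in\{1,2\}$ is a fair observation (the paper's reduction to Proposition~\ref{modcont2IA} tacitly assumes $n\ge 3$), and you are right that only large $n$ is used downstream.
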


The proposition follows easily from Proposition \ref{modcont2IA}, since 
the set of controlled operators in \eqref{AM2IA} is  contained in the one in \eqref{AMDS2IA}, as it follows from \eqref{telescopic}.

\begin{remark}\label{meno-controlli}
Clearly, it is also true that, if 
$$ \mathcal{U}\subset\{z_{\gamma},z_{\gamma\star}\mid z=v,w,\; \gamma=1,2,\; \star=b,r\}$$
is a family of controls which makes \eqref{AM2IA} controllable for every $n\geq 3$ (i.e., \eqref{AM2IA}  is controllable if we set to zero all the controls which are not in $\mathcal{U}$) then the corresponding decoupled modal approximation 
(i.e., system~\eqref{AMDS2IA} where all $z_{\gamma}$ which are not in $\mathcal{U}$ are set to zero, together with all $z_{\gamma\star,j}$ such that $z_{\gamma\star}$ is not in $\mathcal{U}$)
is also controllable. 
\end{remark}

\begin{remark}\label{PC}
It is well-know for finite-dimensional control systems that 
the controls in \eqref{controlDecoupled} can be taken piecewise constant. Moreover, applying Chow--Rashevskii theorem (see Theorem~\ref{teo3jurdjevic}) to the control system having as admissible vector fields $\pm Z_\gamma^{(4n)}$ and $\pm {Z}_{\gamma\star,j}^{(4n)}$, with $Z=V,W$, $\gamma=1,2$, $j=1,\dots,N(p)$, and $\star=b,r$, one deduces that $\phi_0$ can be steered to $\phi_T$ by the concatenation of the flows 
of such vector fields. Equivalently said, \eqref{controlDecoupled} is controllable by piecewise constants controls 
such that at each time instant 
at most one of them is nonzero. 
\end{remark}

\subsection{Main Theorem}\label{sec:approxControl}

\begin{theorem}[Approximate Controllability of the Two Trapped Ions System]
   Let $\varepsilon>0$, $M>0$ and $\phi_0, \phi_T\in \left(L^2(\R)\right)^4$, with $\|\phi_0\|=\|\phi_T\|$. Then, there exists $T_0>0$ such that for all $T>T_0$ we can find piecewise constant controls $u_1$, $u_{1r}$, $u_{1b}$, $u_2$, $u_{2r}$, $u_{2b}$ $\in L^\infty\left((0,T),\C\right)$, with norm smaller than $M$ 
   such that the solution 
   of system~\eqref{S2IA_intro} with initial data $\phi(0)=\phi_0$ satisfies
  $$\left\|\phi(T)-\phi_T\right\|_{\left(L^2(\R)\right)^4}< \varepsilon.$$
\end{theorem}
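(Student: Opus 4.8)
The strategy is a standard three-layer reduction — finite-dimensional control, decoupled modal control, then genuine infinite-dimensional control — with the new ingredient being Theorem~\ref{contcontrol} (approximate decoupling) which bridges the last gap. First I would fix $\varepsilon>0$, $M>0$ and $\phi_0,\phi_T\in(L^2(\R))^4$; after a harmless normalization we may assume $\|\phi_0\|=\|\phi_T\|=1$. Choose $n$ large enough that $\phi_0,\phi_T$ are $(\varepsilon/4)$-close to their orthogonal projections $\phi_0^{(n)},\phi_T^{(n)}$ on $Y_{4n}$; by replacing the targets with these projections and using the triangle inequality it suffices to approximately steer $\phi_0^{(n)}$ to $\phi_T^{(n)}$ within $\varepsilon/2$ in $(L^2(\R))^4$. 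Set $m=n+1$, so that $\omega_m=\sqrt{n}$ and, by Proposition~\ref{specprop1}, the frequencies $\omega_j=\sqrt{j-1}$ with $j<m$ satisfy $\omega_j/\omega_m\in(\R\setminus\Q)\cup\{0\}$; thus the hypotheses of Theorem~\ref{contcontrol} hold for each of the eight coupling operators $V_{1r},W_{1r},\dots,W_{2b}$, taking $Y=Y_{4n}$ (which satisfies \eqref{hypY} by Proposition~\ref{specprop1} b$'$), c$'$)).

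Next, apply Proposition~\ref{controlDecoupled} to the decoupled modal approximation \eqref{AMDS2IA} of order $n$: there is $T_0>0$ and, for each $T\ge T_0$, piecewise constant controls of norm $<M$ steering $\phi_0^{(n)}$ exactly to $\phi_T^{(n)}$ inside $Y_{4n}$. By Remark~\ref{PC} we may take this trajectory to be a finite concatenation of flows $\exp(\tau_k Z_k)$, where each $Z_k$ is one of $\pm Z_\gamma^{(4n)}$ or $\pm Z_{\gamma\star,j}^{(4n)}$, and at each instant at most one control is active. The point is now that each such elementary flow can be \emph{realized approximately by the true dynamics \eqref{GS2IA}}: for the operators $Z_\gamma$ ($\gamma=1,2$) one has $Y_{4n}$ invariant (Proposition~\ref{specprop1} b)), so $\exp(\tau Z_\gamma^{(4n)})$ coincides on $Y_{4n}$ with $\exp(\tau Z_\gamma)$, the genuine flow of system~\eqref{GS2IA} with only that control on; for the operators $Z_{\gamma\star,j}^{(4n)}=Z_{\gamma\star}\Pi[A_{R^m(\nu_j)}]=(Z_{\gamma\star})_j$ in the notation of \eqref{descomp}, Theorem~\ref{contcontrol} provides, for any prescribed tolerance, a time $\bar t$ and a unitary $\Sigma$ with $\exp(\bar t\,Z_{\gamma\star})=\exp(\tau (Z_{\gamma\star})_j)\,\Sigma$ and $\|(\Sigma-I)|_{Y_{4n}}\|<\delta$. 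So each elementary step of the decoupled-modal trajectory is approximated, in the operator norm on $Y_{4n}$, by a concatenation of true flows of \eqref{GS2IA} with a single control active at a time.

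Then I would assemble these approximations into a single true trajectory of \eqref{S2IA_intro} and control the accumulated error via Lemma~\ref{errorEstimation}: taking the tolerance $\delta_k$ at step $k$ so that $\sum_k\delta_k<\varepsilon/2$, the resulting true solution $\tilde\phi(T)$ satisfies $\|\tilde\phi(T)-\phi_T^{(n)}\|<\varepsilon/2$, hence $\|\tilde\phi(T)-\phi_T\|<\varepsilon$. Here one must note that the intermediate states $x_k$ of the decoupled-modal trajectory all lie in $Y_{4n}$ and have norm $1$ (unitarity), so Lemma~\ref{errorEstimation} applies verbatim; one also checks that the controls produced are piecewise constant with $L^\infty$-norm bounded by the original bound $M$ (the decoupling steps reuse exactly the same coupling operators $Z_{\gamma\star}$ appearing in \eqref{S2IA_intro}, only with different durations), and that concatenating finitely many pieces only lengthens, but does not violate, the time requirement $T\ge T_0$ — for the remaining ``slack'' time one simply switches all controls off, which leaves $\phi$ unchanged since the drift is null. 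Finally, by Remark~\ref{meno-controlli} together with the appendix's observation that \eqref{AM2IA} stays controllable using red shift only (or blue shift only), the same argument yields the refinement mentioned after Theorem~\ref{mainTh}.

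The main obstacle is the bookkeeping in the middle step: one has to be careful that the approximating unitaries $\Sigma_k$ from Theorem~\ref{contcontrol} are only $\varepsilon$-close to the identity \emph{on $Y_{4n}$}, not on all of $X=(L^2(\R))^4$ — they move mass into $Y_{4n}^\perp$ — so the error analysis must be carried out exactly as in Lemma~\ref{errorEstimation}, tracking that each successive true state remains within $Y$ \emph{up to the already-accumulated error} and that the next unitary $\Upsilon_{k+1}$ (a genuine flow of \eqref{S2IA_intro}, a priori not preserving $Y_{4n}$) acts on that error term only through its norm. This is precisely why Lemma~\ref{errorEstimation} is phrased with $\widetilde\Upsilon_k=\Sigma_k\Upsilon_k$ and a subspace $Y$, and why the telescoping identity \eqref{telescopic} is needed to match $Z_{\gamma\star}^{(4n)}$ with the sum of the decoupled pieces.
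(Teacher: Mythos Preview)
Your overall architecture matches the paper's proof closely --- reduce to $Y_{4n}$, invoke the decoupled modal approximation, replace each elementary flow by a genuine flow of \eqref{GS2IA} via Theorem~\ref{contcontrol}, and assemble the pieces with Lemma~\ref{errorEstimation}. There is, however, one genuine gap.

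You assert that, with $m=n+1$, ``by Proposition~\ref{specprop1}, the frequencies $\omega_j=\sqrt{j-1}$ with $j<m$ satisfy $\omega_j/\omega_m\in(\R\setminus\Q)\cup\{0\}$.'' Proposition~\ref{specprop1} says nothing of the sort --- it only identifies the eigenvalues and the inclusion relations between eigenspaces and $Y_{4n}$. More importantly, the claim itself is \emph{false} for generic $n$: if $n=4$ then $\omega_2/\omega_5=\sqrt{1}/\sqrt{4}=1/2\in\Q$; if $n=9$ then $\omega_5/\omega_{10}=\sqrt{4}/\sqrt{9}=2/3\in\Q$. For such $n$ the non-resonance hypothesis of Theorem~\ref{contcontrol} fails and the decoupling step cannot be executed.

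The paper remedies this by, after reducing to $\phi_0,\phi_T\in Y_{4n}$, choosing a \emph{prime} $p\ge n$ and working with $Y=Y_{4p}$ and $m=p+1$. For $p$ prime and $1\le h<p$ one checks that $\sqrt{h}/\sqrt{p}$ is irrational: if $\sqrt{h}/\sqrt{p}=a/b$ with $\gcd(a,b)=1$, then $hb^2=pa^2$, so $a^2\mid h$, say $h=sa^2$, whence $p=sb^2$; primality forces $b=1$, $s=p$, so $h=pa^2\ge p$, contradicting $h<p$. This number-theoretic step is precisely the missing ingredient in your argument; once the truncation level is taken prime, the rest of your outline goes through essentially as written.

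A minor additional point: after projecting $\phi_0,\phi_T$ onto $Y_{4n}$ you must also adjust so that the two projected vectors have equal norm, since Proposition~\ref{controlDecoupled} requires $\|\phi_0\|=\|\phi_T\|$; the paper does this explicitly by picking $\bar\phi_0,\bar\phi_T\in Y_{4n}$ with $\|\bar\phi_0\|=\|\bar\phi_T\|=\|\phi_0\|$.
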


\begin{proof}
First notice that it is enough to prove the theorem for $\phi_0, \phi_T\in {Y_{4n}}$ for any given $n\in\ N$. 
Indeed, assume that $n$ is large enough so that there exist $\bar{\phi}_0, \bar{\phi}_T\in {Y_{4n}}$ with
$$\|\phi_0\|=\|\bar{\phi}_0\|=\|\bar{\phi}_T\|=\|\phi_T\|, \quad \|\bar{\phi}_0-\phi_0\|_{\left(L^2(\R)\right)^4}< \frac{\varepsilon}{3},\quad \|\bar{\phi}_T-\phi_T\|_{\left(L^2(\R)\right)^4}< \frac{\varepsilon}{3}.$$
Assuming that the theorem is true in the case of initial and final data in ${Y_{4n}}$ implies that there exists an admissible trajectory $\phi(\cdot)$ of~\eqref{S2IA_intro} such that 
$$\phi(0)=\bar{\phi}_0, \qquad \|\phi(T)-\bar{\phi}_T\|_{\left(L^2(\R)\right)^4}<\frac{\varepsilon}{3}.$$
The conclusion then follows from the unitarity of the flow of \eqref{S2IA_intro}.

Assume then that $\phi_0, \phi_T\in {Y_{4n}}$, $n\in \N$. 
Let $p\geq n$ be a prime number and define $Y={Y_{4p}}$. Consider the decoupled modal approximation~\eqref{AMDS2IA} of order $p$. By Proposition~\ref{controlDecoupled} we know that 
it 
is controllable.
Following Remark~\ref{PC}, 
there exists a sequence of times $t_1,\dots,t_N >0 $,  amplitudes $u_1,\dots,u_N\in [-M,M]$, and coupling operators $S_1,\dots,S_N\in \{{Z}_\gamma^{(4p)}, Z_{\gamma\star,j}^{(4p)}\mid Z=V,W,\; \gamma=1,2,\; j=1,\dots,N(p+1),\; \star=b,r\}$ such that
$$\phi_T=e^{t_N u_N S_N}\cdots e^{t_1 u_1 S_1}\phi_0.$$

In order to mimic the control scheme of the finite-dimensional system in system~\eqref{S2IA_intro}, we are going to replace the controls of the decoupled modal approximation by those given by Theorem~\ref{contcontrol} and then study how close  the final state $\phi(T)$ is to the target state $\phi_T$ .

For a given $S_k$, $k=1,\dots,N$, define 
$$\Upsilon_k=e^{t_k u_k S_k}$$
and consider the following two cases. 
\begin{itemize}
\item Either $S_k$ is of the type ${Z}_\gamma^{(4p)}$, with $Z\in\{V,W\},\; \gamma\in\{1,2\}$.
Then we drive 
the original system~\eqref{GS2IA} by the  corresponding coupling operator $Z_\gamma$ with the same control $u_k$ and for the same time $t_k$ (i.e., we set the corresponding control $z_\gamma$ at the value $u_k$, all the other controls to zero, and we follow the flow of the system for a time $t_k$).

We are then approximating the flow $\Upsilon_k$
 by an admissible flow 
 $$\widetilde{\Upsilon}_k=e^{t_k u_k Z_\gamma}$$ of \eqref{GS2IA} 
which can be written as $ \Sigma_k \Upsilon_k$ with $\Sigma_k$ unitary and $\Sigma_k|_Y=I$. 
The latter fact follows from Proposition~\ref{specprop1}, point b).

\item Or $S_k$ is of the type ${Z}_{\gamma\star,j}^{(4p)}$, with $Z\in\{V,W\}$, $\gamma\in\{1,2\}$, $\star\in\{b,r\}$, $j\in\{1,\dots,N(p+1)\}$. 
Then we apply Theorem \ref{contcontrol} with $m=p+1$, 
$U={Z}_{\gamma\star}$, 
$\ell=j$, $\hat{t}=u_kt_k$, and taking as $\varepsilon$ the quantity $\varepsilon/N$. 
Let us check that the hypotheses of the theorem are satisfied.  
On the one hand, condition \eqref{hypY} on $Y$ is verified thanks to Proposition \ref{specprop1}, points b$'$) and c$'$) (taking $n=p$). 
On the other hand, in order to check 
that $\frac{\omega_h}{\omega_m}\in (\R\setminus\Q) \cup \{0\}$ for each $h<m$, 
let us assume by contradiction that there exists $1<h<p$ such that $\frac{\sqrt{h}}{\sqrt{p}}=\frac{a}{b}$ with $a,b\in\N$ relatively prime. Therefore, 
 $p=h\frac{b^2}{a^2}$, which  implies that $a^2$ divides to $h$. Then we can write $h=s a^2$, with $s\in\N$, and  we have $p=s b^2$. However, as $p$ is prime, this implies that $b=1$ and $s=p$. Hence, $h=p a^2$. Since $h\leq p-1$, however, this leads to a contradiction. The hypotheses of Theorem~\ref{contcontrol} are then satisfied.

As a consequence of Theorem~\ref{contcontrol} we deduce that there exists 
$\bar{t}\in \R$ such that
$$\exp(\bar{t} {Z}_{\gamma\star}) = \Sigma_k \exp( u_kt_k {Z}_{\gamma\star} 
\Pi[A_{R^{p+1}(\nu_j)}])= \Sigma_k \Upsilon_k,$$
where 
 $\Sigma_k$ is a unitary operator that satisfies
$$\big\| (\Sigma_k - I)|_Y \big\|_{\Lin(Y,(L^2(\R))^4)} < \frac{\varepsilon}{N}.$$
The strategy is then to set in  the original system~\eqref{GS2IA} 
the control $z_{\gamma\star}$ corresponding to 
$Z_{\gamma\star}$ at the value $u_k'=\pm u_k$, all other controls to zero, and let the system flow for a time $\tau_k>0$, where the choice of $\tau_k$ and of the sign of $u_k'$ are such that $u_k'\tau_k=\hat t$. Hence 
$$\widetilde{\Upsilon}_k=\exp(\bar{t} {Z}_{\gamma\star})$$
is an admissible flow for system~\eqref{GS2IA}.

\end{itemize}

With the previous procedure we have obtained a control strategy for the two trapped ions system. Finally, applying Lemma \ref{errorEstimation} with $X:=\left(L^2(\R)\right)^4$, $x_0=\phi_0$
$x_j= \Upsilon_k x_{j-i}$, we obtain
$$\|\phi(T)-\phi_T\|<  \sum_{j=1}^N \frac{\varepsilon}{N} = \varepsilon.$$
Therefore system~\eqref{S2IA_intro} is approximately controllable.
\end{proof}

\begin{remark}\label{meno-controlli-conclusion}
It follows from the argument used to prove the theorem and from Remark~\ref{meno-controlli}
that if 
$$ \mathcal{U}\subset\{z_{\gamma},z_{\gamma\star}\mid z=v,w,\; \gamma=1,2,\; \star=b,r\}$$
is a family of controls which makes \eqref{AM2IA} controllable for every $n\geq 3$ 
then the same controls are also sufficient to approximately control system~\eqref{GS2IA} (or, equivalently, system~\eqref{S2IA_intro} up to identification of real and imaginary parts as in \eqref{real-imag}).
The appendix discusses which conditions on the set $\mathcal{U}$ guarantee such a controllability property.
\end{remark}

\section{Conclusions}

In this work we have studied a new decomposition method for Schr\"odinger systems based on spectral techniques. The proposed decomposition allows to obtain approximate controllability results by using finite-dimensional techniques. The method provides satisfactory theoretical results (sufficient conditions for approximate controllability), although it requires large times for the decoupling procedure. 

The method is applied to the two trapped ions model, for which we obtain  a new approximate controllability result. Since the underlying approximate controllability result is based on constructive considerations, the result on the two trapped ions model actually provides a  motion planning algorithm (as detailed in~\cite{Paduro2013}).

\subsection*{Acknowledgments}
The authors would like to thank Eduardo Cerpa and Alberto Mercado for many fruitful discussions.

\bibliographystyle{amsplain}
\bibliography{biblio}

\section*{Appendix: Controllability of modal approximations
}\label{appendixB}\label{A-B} 

The scope of this appendix is to present the Lie algebra computations necessary for the proof of Proposition~\ref{modcont2IA} and, more generally, to identify subfamilies of controls using which it is possible to control the modal approximation~(\ref{AM2IA}) for every $n$ sufficiently large.

As 
recalled above, the key criterion allowing the controllability analysis of 
(\ref{AM2IA}) 
is the Chow--Rashevskii  theorem (Theorem~\ref{teo3jurdjevic}). 
A useful approach is to apply such a criterion to the lift of system~(\ref{AM2IA}) in SU$(4n)$ and to exploit the structure of 
homogeneous space of $S^{8n-1}$ (the unit sphere in $\C^{4n}$) with respect to $SU(4n)$. Based on such lifting procedure, Albertini and D'Alessandro proved in \cite{albertini} a result implying that (\ref{AM2IA}) is controllable if and only if the Lie algebra generated by 
$\{Z_\gamma,Z_{\gamma\star}\mid Z=V,W,\;\gamma=1,2,\;\star=r,b\}$ is equal to $\mathfrak{su}(4n)$ or contains a subalgebra conjugate to $\mathfrak{sp}(2n)$. 

The main result of the appendix, which implies  Proposition~\ref{modcont2IA} as a particular case, is the following. 

\begin{proposition}\label{contacci}
Let $n\geq 3$ and  assume that $\mathcal{F}\subset \{Z_\gamma^{(4n)},Z_{\gamma\star}^{(4n)}\mid Z=V,W,\;\gamma=1,2,\;\star=r,b\}$ is such that,
for every $\gamma=1,2$ there exists  $\star\in\{r,b\}$ (possibly depending on $\gamma$)  such that 
$\{Z_\gamma^{(4n)},Z_{\gamma\star}^{(4n)}\mid Z=V,W\}\subset \mathcal{F}$.
Then the Lie algebra generated by $\mathcal{F}$ is equal to $\mathfrak{su}(4n)$.
\end{proposition}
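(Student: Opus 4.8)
The plan is to work in the block-decomposed coordinates given by the permutation $P$ of \eqref{permutation}, so that every coupling operator becomes a $4n\times 4n$ matrix built out of $4$ blocks of size $n\times n$, each block being $\pm I_n$, $\pm D$, or $\pm D^T$ (times $i$ or $1$). First I would reduce to the case $\mathcal{F}=\{V_1^{(4n)},W_1^{(4n)},V_{1r}^{(4n)},W_{1r}^{(4n)},V_2^{(4n)},W_2^{(4n)},V_{2r}^{(4n)},W_{2r}^{(4n)}\}$ (or with $r$ replaced by $b$ for either index $\gamma$): the hypothesis guarantees that for each $\gamma$ we have access to $V_\gamma,W_\gamma$ and one sideband pair $V_{\gamma\star},W_{\gamma\star}$, and since $V_{\gamma b}^{(4n)},W_{\gamma b}^{(4n)}$ are obtained from $V_{\gamma r}^{(4n)},W_{\gamma r}^{(4n)}$ by transposing the $D$-blocks (replacing $D\leftrightarrow D^T$), the two choices of $\star$ are symmetric and the Lie algebra generated is the same up to the orthogonal conjugation exchanging $D$ and $D^T$. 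So it suffices to treat $\star=r$ throughout.

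The core of the argument is a direct bracket computation showing that the generated Lie algebra $\mathfrak{g}$ acts irreducibly and is large. The key steps, in order: (1) From $V_1^{(4n)},W_1^{(4n)}$ alone (two block matrices supported on the $(gg,eg)$ and $(ge,ee)$ pairs with blocks $\pm I_n$) one generates, via $[V_1,W_1]$ and further brackets, all operators of the form ``$\mathfrak{su}(2)$ acting diagonally on each Fock level $|j\rangle$'' within those two pairs; similarly $V_2^{(4n)},W_2^{(4n)}$ give the $(gg,ge)$ and $(eg,ee)$ couplings. Together these four operators generate a copy of $\mathfrak{su}(4)\otimes I_n$ (the internal-state algebra, acting identically on every phonon level) — this is a finite check on $4\times4$ block patterns. (2) Bracketing this $\mathfrak{su}(4)\otimes I_n$ with $V_{1r}^{(4n)},W_{1r}^{(4n)}$ (whose nonzero blocks are $D$ and $D^T$) produces operators whose phonon part is $D$, $D^T$, and hence $[D,D^T]=\mathrm{diag}(1,1,\dots,1,-(n-1))$ type matrices — in particular a phonon-space operator that is not a multiple of the identity. (3) Iterating brackets of $D$, $D^T$, and $[D,D^T]$, using that $D$ is the (weighted) shift, one generates all of $\mathfrak{gl}(n)$ — or at least enough of it to conclude irreducibility of $\mathfrak{g}$ on $\C^{4n}$: the shift and its transpose generate, under brackets, a transitive algebra on the $n$ phonon levels. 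Combining irreducibility on phonon levels with the full internal $\mathfrak{su}(4)$ action shows $\mathfrak{g}$ acts irreducibly on $\C^{4n}=\C^4\otimes\C^n$. (4) Finally, invoke the classification argument of Albertini–D'Alessandro cited in the appendix: $\mathfrak{g}\subset\mathfrak{su}(4n)$, is irreducible, and one checks it is not conjugate to $\mathfrak{sp}(2n)$ or $\mathfrak{so}(4n)$ — e.g. by exhibiting a bracket that lands in an element with the wrong symmetry type for $\mathfrak{sp}$, using that among the generators we have both symmetric-type ($F$, i.e.\ $W$) and antisymmetric-type ($E$, i.e.\ $V$) operators — and conclude $\mathfrak{g}=\mathfrak{su}(4n)$.

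The main obstacle will be step (3)–(4): showing that the phonon-space brackets of $D$ and $D^T$ generate a \emph{transitive} (hence, with the internal part, irreducible) algebra, and then excluding the symplectic alternative cleanly. The transitivity is essentially the statement that a weighted Jacobi-type matrix together with the diagonal it generates spans enough of $\mathfrak{gl}(n)$; the distinct weights $\sqrt{1},\dots,\sqrt{n-1}$ in $D$ are what make this work (equal weights would fail), so the argument must use them. For excluding $\mathfrak{sp}(2n)$, the cleanest route is to note that a conjugate of $\mathfrak{sp}(2n)$ preserves a nondegenerate skew form, and then to produce two generators (or a generator and a bracket) that cannot simultaneously be skew-symmetric with respect to any fixed skew form — for instance because $V_1^{(4n)}$ and $[V_1^{(4n)},V_{1r}^{(4n)}]$ have incompatible symmetry, given that conjugation preserving a skew form forces a parity constraint that the explicit $4\times4$ block structure violates. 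I would keep the $n\geq 3$ hypothesis in play precisely here: for $n=1,2$ the phonon algebra is too small to be transitive and the conclusion genuinely fails, so the computation in step (3) is where that hypothesis gets used.
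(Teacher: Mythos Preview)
Your step (1) contains a genuine error that breaks the whole plan. In the coordinates given by $P$, the operators $V_1^{(4n)},W_1^{(4n)}$ act as $(-i\,I_2\otimes\sigma_x)\otimes I_n$ and $(I_2\otimes i\sigma_y)\otimes I_n$ on $\C^2_{\mathrm{ion\,2}}\otimes\C^2_{\mathrm{ion\,1}}\otimes\C^n$, while $V_2^{(4n)},W_2^{(4n)}$ act as $(-i\,\sigma_x\otimes I_2)\otimes I_n$ and $(i\sigma_y\otimes I_2)\otimes I_n$. The two copies of $\mathfrak{su}(2)$ they generate sit in \emph{commuting} tensor factors, so the Lie algebra spanned by the four base operators is only $(\mathfrak{su}(2)\oplus\mathfrak{su}(2))\otimes I_n$, of dimension $6$, not $\mathfrak{su}(4)\otimes I_n$ of dimension $15$. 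The ``finite check on $4\times4$ block patterns'' you announce would fail: no bracket among $V_1,W_1,V_2,W_2$ ever produces an operator like $\sigma_x\otimes\sigma_x$. To get genuine two-ion entangling directions one must bring in the sideband operators $V_{\gamma\star},W_{\gamma\star}$ from the start, and then the separation ``first internal $\mathfrak{su}(4)$, then phonon algebra'' collapses. A secondary issue: your symmetry reduction to ``$\star=r$ throughout'' only covers the cases $(r,r)$ and $(b,b)$; the single phonon-reversal conjugation swaps $r\leftrightarrow b$ for \emph{both} ions simultaneously, so the mixed assignments $(r,b)$ and $(b,r)$ are not reduced to $(r,r)$ by it.

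The paper proceeds quite differently and avoids both pitfalls. It does not separate internal and phonon degrees of freedom. Instead it invokes Proposition~\ref{contacciEL} (the Law--Eberly result) \emph{separately} for each $\gamma$: the family $\{V_\gamma^{(4n)},W_\gamma^{(4n)},V_{\gamma\star}^{(4n)},W_{\gamma\star}^{(4n)}\}$ generates a copy of $\mathfrak{su}(2n)$ embedded block-diagonally in $\mathfrak{su}(4n)$, and the two embeddings (for $\gamma=1$ and $\gamma=2$) are different. Since Proposition~\ref{contacciEL} already handles both $\star=r$ and $\star=b$, the mixed case needs no global reduction. The rest of the argument is a direct, elementary sequence of brackets between these two $\mathfrak{su}(2n)$'s: first one obtains all block-diagonal matrices $\mathrm{diag}(A^{(n)},B^{(n)},C^{(n)},D^{(n)})$ with $A,B,C,D\in\mathfrak{su}(n)$ (using $[\mathfrak{su}(n),\mathfrak{su}(n)]=\mathfrak{su}(n)$), then the off-diagonal $n\times n$ blocks (using $\mathfrak{su}(n)\,\mathfrak{gl}(n)=\mathfrak{gl}(n)$), and finally all of $\mathfrak{su}(4n)$. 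No irreducibility/classification argument and no exclusion of $\mathfrak{sp}(2n)$ are needed.
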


A preliminary step in the proof of Proposition~\ref{contacci} is the analysis of the controllability of the 
{\it modal approximation of order $n$ of the Law--Eberly system}, namely, the control system in $\C^{2n}$
\begin{equation}\label{AMS1IA}
    \frac{d}{dt}\phi =\left( v V^{(2n)}+ w W^{(2n)}+v_r V_r^{(2n)} +w_r W_r^{(2n)}+v_b V_b^{(2n)} +w_b W_b^{(2n)} \right) \phi,
\end{equation}
where the controls $v, w, v, w_r, v_b, w_b,$ are real-valued and the coupling operators are defined by
\begin{equation}\label{coupmatrices}
\begin{array}{c@{\hspace{1mm}}c@{\hspace{1mm}}cc@{\hspace{1mm}}c@{\hspace{1mm}}c}
    V^{(2n)} &=&  -i\left( \begin{array}{cc} 0 &  I_n \\  I_n & 0\\   \end{array}\right), &
    W^{(2n)} &=&  \left( \begin{array}{cc} 0 &  -I_n \\  I_n & 0\\   \end{array}\right), \\
    & & & & &  \\
    V_b^{(2n)} &=&  -i\left( \begin{array}{cc} 0 &  D \\  D^T & 0\\   \end{array}\right), &
    W_b^{(2n)} &=&  \left( \begin{array}{cc} 0 &  -D \\  D^T & 0\\   \end{array}\right), \\
    & & & & &  \\
    V_r^{(2n)} &=&  -i\left( \begin{array}{cc} 0 &  D^T \\  D & 0\\   \end{array}\right), &
    W_r^{(2n)} &=&  \left( \begin{array}{cc} 0 &  -D^T \\  D & 0\\   \end{array}\right), \\
\end{array}
\end{equation}
where the matrix $D$ is defined as in \eqref{defD}.

\begin{proposition}\label{contacciEL}
Let $n\geq 2$ and  assume that $\mathcal{F}_{\mathrm{EL}}= \{Z^{(2n)},Z_{\star}^{(2n)}\mid Z=V,W\}$ for $\star=r$ or $\star=b$.
Then the Lie algebra generated by $\mathcal{F}_{\mathrm{EL}}$ is equal to $\mathfrak{su}(2n)$.
\end{proposition}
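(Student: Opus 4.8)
The plan is to verify the hypothesis of the Albertini--D'Alessandro criterion: it suffices to show that the Lie algebra $\mathfrak{g}$ generated by $\mathcal{F}_{\mathrm{EL}}=\{V^{(2n)},W^{(2n)},V_\star^{(2n)},W_\star^{(2n)}\}$ (say with $\star=b$; the case $\star=r$ is symmetric, obtained by transposing $D$) equals $\mathfrak{su}(2n)$. I would work throughout in the $2n\times 2n$ block form of \eqref{coupmatrices}, writing a generic skew-Hermitian matrix in $2\times 2$ blocks of size $n$. First I would compute the brackets $[V^{(2n)},W^{(2n)}]$, $[V_b^{(2n)},W_b^{(2n)}]$, and the mixed brackets $[V^{(2n)},W_b^{(2n)}]$, $[W^{(2n)},V_b^{(2n)}]$, etc.; using $I_n I_n^{T}=I_n$, $D D^{T}$, $D^{T} D$, these produce block-diagonal matrices $\mathrm{diag}(A,B)$ with $A,B$ built from polynomials in $DD^{T}$ and $D^{T}D$. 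Since $D$ is the weighted shift with entries $\sqrt{1},\dots,\sqrt{n-1}$, the matrix $DD^{T}=\mathrm{diag}(1,2,\dots,n-1,0)$ while $D^{T}D=\mathrm{diag}(0,1,2,\dots,n-1)$ are diagonal with \emph{distinct} entries; hence real polynomials in them span all real diagonal matrices, and taking further brackets with $V^{(2n)}$ and with $V_b^{(2n)},W_b^{(2n)}$ transports this to produce enough off-diagonal generators.

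The core of the argument is then a bootstrapping/irreducibility step. From the diagonal matrices $\mathrm{diag}(\lambda,-\lambda)$ with $\lambda$ an arbitrary real diagonal matrix (obtained above as $i\lambda$ in $\mathfrak{su}$), bracketing against $V^{(2n)}=-i\begin{pmatrix}0&I_n\\I_n&0\end{pmatrix}$ yields all anti-block-diagonal matrices $\begin{pmatrix}0&-i\mu\\-i\mu&0\end{pmatrix}$ with $\mu$ real diagonal; bracketing those against each other returns diagonal matrices, and bracketing against $V_b^{(2n)},W_b^{(2n)}$ (which carry the shift $D$) lets one "move off the diagonal" one step at a time. Concretely, I would show by induction on the distance from the diagonal that every elementary skew-Hermitian generator $E_{pq}-E_{qp}$ and $i(E_{pq}+E_{qp})$ (in the $2n$-dimensional index set) lies in $\mathfrak{g}$: the base case is the diagonal-and-adjacent entries just produced, and the inductive step multiplies by $D$ (via a bracket with $V_b^{(2n)}$ or $W_b^{(2n)}$) to increase the band. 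Care is needed because $D$ has a zero in the last row, so one must check the induction still reaches the extreme indices; this is where using \emph{both} $D$ (from the $b$ operators) and the plain identity blocks (from $V^{(2n)},W^{(2n)}$), together with the distinctness of the diagonal entries of $DD^{T}$ and $D^{T}D$, closes the gaps. Once all elementary generators are in $\mathfrak{g}$, we get $\mathfrak{g}=\mathfrak{su}(2n)$, using $n\geq 2$ to guarantee $D$ is a nonzero (hence informative) shift.

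The main obstacle I anticipate is precisely this inductive "filling in" of $\mathfrak{su}(2n)$ while tracking the zero pattern of $D$: one must organize the brackets so that no row or column index is left unreached, and verify that the polynomials in $DD^{T}$ and $D^{T}D$ really do separate all diagonal slots (this uses that $1,2,\dots,n-1$ are distinct and that the two zeros sit in different positions in $DD^{T}$ versus $D^{T}D$). A clean way to package this is to first prove the auxiliary fact that $\{\mathrm{diag}(\lambda,-\lambda): \lambda\in\R^n\ \text{diagonal}\}\subset i\cdot\mathfrak{g}$ and that $V^{(2n)},V_b^{(2n)}\in\mathfrak{g}$ generate, together with these diagonals, an irreducible set of operators on $\C^{2n}$ — then invoke that a Lie subalgebra of $\mathfrak{su}(N)$ acting irreducibly and containing a nonzero element with all-distinct-eigenvalue structure is either all of $\mathfrak{su}(N)$ or conjugate to $\mathfrak{sp}$, and rule out the symplectic case by exhibiting an element of $\mathfrak{g}$ (e.g.\ a generic real-diagonal $\mathrm{diag}(\lambda,-\lambda)$ type matrix) whose centralizer pattern is incompatible with $\mathfrak{sp}(n)$. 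Either route reduces the proposition to finite, explicit block computations with the shift matrix $D$.
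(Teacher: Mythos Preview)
Your proposal is sound in its primary approach but takes a genuinely different route from the paper. The paper does not perform any bracket computations at all: for $\star=r$ it simply cites \cite{Yuan2007}, and for $\star=b$ it observes that a reordering of the basis (reversing the order within each $n$-block) transforms $V_b^{(2n)},W_b^{(2n)}$ into matrices of the same shape as $V_r^{(2n)},W_r^{(2n)}$ with $D$ replaced by the weighted shift $\hat D$ carrying entries $\sqrt{n-1},\dots,\sqrt{1}$, while leaving $V^{(2n)},W^{(2n)}$ invariant; since the argument in \cite{Yuan2007} only uses that the super-diagonal entries of $D$ are nonzero and pairwise distinct, it applies verbatim to $\hat D$. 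Your approach, by contrast, redoes the content of \cite{Yuan2007} from scratch: exploiting that $DD^T$ and $D^TD$ are diagonal with the distinct entries $\{0,1,\dots,n-1\}$ (with the zero in different slots) to generate all diagonal traceless elements, then using the shift $D$ together with $V^{(2n)},W^{(2n)}$ to propagate off the diagonal. This is self-contained and correct in outline; the paper's version is shorter only because the heavy lifting is delegated to the reference.

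One caution about your secondary route: the element $\mathrm{diag}(i\lambda,-i\lambda)$ you propose for ruling out the symplectic alternative actually \emph{does} lie in the standard $\mathfrak{sp}(n)\subset\mathfrak{su}(2n)$ (take $A=i\lambda$, $B=0$ in the block description $\left(\begin{smallmatrix}A&B\\-\bar B&\bar A\end{smallmatrix}\right)$), so that particular witness will not separate $\mathfrak{sp}(n)$ from $\mathfrak{su}(2n)$. You would need a different obstruction---for instance, once you have produced an off-diagonal block $\left(\begin{smallmatrix}0&C\\-C^\dag&0\end{smallmatrix}\right)$ with $C$ not symmetric, that already excludes $\mathfrak{sp}(n)$. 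Since your direct filling argument does not rely on this step, this does not affect the overall correctness of the proposal.
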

\begin{proof}
The proof is contained in \cite{Yuan2007} in the case $\star=r$.  
The case $\star=b$ can be treated in complete analogy, since by a simple reordering of coordinates one can 
transform $V_b^{(2n)}$ and $W_b^{(2n)}$ into 
\begin{equation}\label{coupmatrices2}
\begin{array}{c@{\hspace{1mm}}c@{\hspace{1mm}}cc@{\hspace{1mm}}c@{\hspace{1mm}}c}
\hat V_b^{(2n)} &=&  -i\left( \begin{array}{cc} 0 &  \hat D^T \\  \hat D & 0\\   \end{array}\right), &
   \hat  W_b^{(2n)} &=&  \left( \begin{array}{cc} 0 &  -\hat D^T \\  \hat D & 0\\   \end{array}\right), 
    \end{array}
\end{equation}
respectively, with 
$$\hat D= \left(
\begin{array}{ccccc}
    0 & \sqrt{n-1} & & & \\
     & 0 & \sqrt{n-2} & & \\
    &  & \ddots & \ddots & \\
    & & & 0 & \sqrt{1} \\
    & & &  & 0 \\
\end{array}
\right),$$
while preserving $V^{(2n)}$ and $W^{(2n)}$. 
The same arguments as in  \cite{Yuan2007} then allow to conclude.
\end{proof}

\noindent {\it Proof of Proposition~\ref{contacci}.}
First of all let us fix the following notation: in order to ease the reading of the proof, we add here below an index to each square matrix indicating its size, so that $A^{(k)}$ denotes a general $k\times k$ matrix and $0^{(k)}$ the $k\times k$ null matrix. 
We also write $\mathrm{diag}(A^{(k_1)},\dots,A^{(k_r)})$ to denote the square matrix of size $k_1+\dots +k_r$ having $A^{(k_1)},\dots,A^{(k_r)}$ as block-diagonal terms. 

Let us apply  
Proposition~\ref{contacciEL} to each of the two subsystems of \eqref{AM2IA} obtained by setting to zero
either all controls of the type $z_1, z_{1\star}$ or all controls of the type $z_2,z_{2\star}$. 
In each of the two cases,  
 we recover two decoupled copies of the Law--Eberly modal approximation of order $n$, controlled simultaneously by the same controls.
Proposition~\ref{contacciEL} then  implies 
that 
for any choice of 
$$A^{(2n)}, \left( \begin{array}{cc} B_{11}^{(n)}& B_{12}^{(n)} \\ B_{21}^{(n)} & B_{22}^{(n)}   \end{array}\right) 
\in\mathfrak{su}(2n)$$
the Lie algebra
$\mathrm{Lie}\mathcal{F}$ contains both
\begin{equation}\label{f-t}
    \left( \begin{array}{cc} A^{(2n)}& 0^{(2n)} \\ 0^{(2n)} & A^{(2n)}   \end{array}\right)\quad \mbox{and}\quad  \left( \begin{array}{cccc} B_{11}^{(n)}& 0^{(n)}& B_{12}^{(n)}&0^{(n)} \\ 0^{(n)} & B_{11}^{(n)}&0^{(n)}&B_{12}^{(n)}\\ B_{21}^{(n)}&0^{(n)}& B_{22}^{(n)}&0^{(n)}\\ 0^{(n)}&B_{21}^{(n)}&0^{(n)}&B_{22}^{(n)}   \end{array}\right).
    \end{equation}

Special types of matrix appearing in \eqref{f-t} are
\[\mathrm{diag}( A^{(n)},B^{(n)},A^{(n)},B^{(n)}),\quad \mathrm{diag}( C^{(n)},C^{(n)},D^{(n)},D^{(n)}),\]
for $A^{(n)},B^{(n)},C^{(n)},D^{(n)}\in \mathfrak{su}(n)$.
Taking brackets between them (with $A^{(n)}=0^{(n)}$ or $B^{(n)}=0^{(n)}$ and $C^{(n)}=0^{(n)}$ or $D^{(n)}=0^{(n)}$) 
and since 
$[\mathfrak{su}(n),\mathfrak{su}(n)]=\mathfrak{su}(n)$, 
one deduces that, 
\begin{equation}\label{bl-d}
\mathrm{diag}( A^{(n)},B^{(n)},C^{(n)},D^{(n)})\in \mathrm{Lie}\mathcal{F}
\end{equation}
for every $A^{(n)},B^{(n)},C^{(n)},D^{(n)}\in \mathfrak{su}(n)$.

Another special type of matrices appearing in \eqref{f-t} are 
\begin{equation}\label{f-ttt}
   \left( \begin{array}{cccc} 0^{(n)}& A^{(n)}& 0^{(n)}&0^{(n)} \\ -(A^{(n)})^\dag & 0^{(n)}&0^{(n)}&0^{(n)}\\0^{(n)}&0^{(n)}& 0^{(n)}&A^{(n)}\\ 0^{(n)}&0^{(n)}&-(A^{(n)})^\dag&0^{(n)}   \end{array}\right)
    \end{equation}
with $A^{(n)}\in \mathfrak{gl}(n)$. Taking brackets of matrices in \eqref{bl-d} and in \eqref{f-ttt}, and exploiting the fact that 
$\mathfrak{su}(n)\mathfrak{gl}(n)=\mathfrak{gl}(n)$, we conclude that all matrices of the type
\begin{equation}\label{f-ttt-t}
   \left( \begin{array}{cccc} 0^{(n)}& A^{(n)}& 0^{(n)}&0^{(n)} \\ -(A^{(n)})^\dag & 0^{(n)}&0^{(n)}&0^{(n)}\\0^{(n)}&0^{(n)}& 0^{(n)}&B^{(n)}\\ 0^{(n)}&0^{(n)}&-(B^{(n)})^\dag&0^{(n)}   \end{array}\right)
    \end{equation}
with $A^{(n)},B^{(n)}\in \mathfrak{gl}(n)$ are in $\mathrm{Lie}\mathcal{F}$.
Taking linear combinations of matrices of the type \eqref{f-ttt-t}  with those of the type \eqref{bl-d}, it turns out that $\mathrm{Lie}\mathcal{F}$ contains all matrices of the type 
$$
\mathrm{diag}( A^{(2n)},B^{(2n)})\in \mathrm{Lie}\mathcal{F},\quad A^{(2n)},B^{(2n)}\in \mathfrak{su}(2n).
$$

We are left to prove that  $\left( \begin{array}{cc} 0^{2n}& C^{2n} \\ -(C^{2n})^\dag & 0^{2n}   \end{array}\right)$ is in 
$\mathrm{Lie}\mathcal{F}$ for $C^{2n}\in \mathfrak{gl}(2n)$.
Because of \eqref{f-t} and \eqref{bl-d},  $\mathrm{Lie}\mathcal{F}$ contains all brackets 
between matrices of the type $\mathrm{diag}(A^{(n)},B^{(n)},0^{(2n)})$ and
\[ 
\left( \begin{array}{cccc} 0^{(n)}& 0^{(n)}& C^{(n)}&0^{(n)} \\ 0^{(n)} & 0^{(n)}&0^{(n)}&C^{(n)}\\-(C^{(n)})^\dag&0^{(n)}& 0^{(n)}&0^{(n)}\\ 0^{(n)}&-(C^{(n)})^\dag&0^{(n)}&0^{(n)}   \end{array}\right),\]
for $A^{(n)},B^{(n)}\in \mathfrak{su}(n)$, and $C^{(n)}\in \mathfrak{gl}(n)$.
Hence 
\begin{equation}\label{enough}
\left( \begin{array}{cccc} 0^{(n)}& 0^{(n)}& A^{(n)}&0^{(n)} \\ 0^{(n)} & 0^{(n)}&0^{(n)}&B^{(n)}\\-(A^{(n)})^\dag&0^{(n)}& 0^{(n)}&0^{(n)}\\ 0^{(n)}&-(B^{(n)})^\dag&0^{(n)}&0^{(n)}   \end{array}\right)\in \mathrm{Lie}\mathcal{F},\qquad \mbox{for all } A^{(n)},B^{(n)}\in \mathfrak{gl}(n).
\end{equation}

Taking brackets between matrices of the type \eqref{f-ttt-t} and \eqref{enough}, we 
easily deduce that 
$$\left( \begin{array}{cccc} 0^{(n)}& 0^{(n)}& 0^{(n)}&C^{(n)} \\ 0^{(n)} & 0^{(n)}&D^{(n)}&0^{(n)}\\0^{(n)}&-(D^{(n)})^\dag& 0^{(n)}&0^{(n)}\\ -(C^{(n)})^\dag&0^{(n)}&0^{(n)}&0^{(n)}   \end{array}\right)\in \mathrm{Lie}\mathcal{F},\qquad \mbox{for every }C^{(n)},D^{(n)}\in \mathfrak{gl}(n).$$
This, together with \eqref{enough}, completes the proof of the equality $ \mathrm{Lie}\mathcal{F}=\mathfrak{su}(4n)$. \hfill $\Box$

\end{document}